\newtheorem{theorem}{Theorem}
\newtheorem{prop}[theorem]{Proposition}
\newtheorem{lemma}[theorem]{Lemma}
\theoremstyle{remark}
\newtheorem{definition}{Definition}
\numberwithin{theorem}{section}
\newcommand{\be} {\begin{equation}}
\newcommand{\ee} {\end{equation}}
\newcommand{\bea} {\begin{eqnarray}}
\newcommand{\eea} {\end{eqnarray}}
\newcommand{\Bea} {\begin{eqnarray*}}
\newcommand{\Eea} {\end{eqnarray*}}
\def\R{{\mathbb R}}
\def\R{{\mathbb R}}
\numberwithin{equation}{section}
\newcommand{\Rn} {\mathbb{R}^N}  
\begin{document}

\title[$(p,q)$-fractional elliptic problem]{Critical $(p,q)$-fractional problems involving a sandwich type nonlinearity}

 \author[Bhakta]{Mousomi Bhakta}
\address{M. Bhakta, Department of Mathematics\\
Indian Institute of Science Education and Research Pune (IISER-Pune)\\
Dr Homi Bhabha Road, Pune-411008, India}
\email{mousomi@iiserpune.ac.in}
\author[Fiscella]{Alessio Fiscella}
\address{A. Fiscella, Departamento de Matem\'atica\\
Universidade Estadual de Campinas, IMECC, Rua S\'ergio Buarque de Holanda 651, CEP 13083--859,  Campinas SP, Brazil}
 \email{fiscella@unicamp.br}
  \author[Gupta]{Shilpa Gupta}
\address{S. Gupta, Department of Mathematics\\
Indian Institute of Science Education and Research Pune (IISER-Pune)\\
Dr Homi Bhabha Road, Pune-411008, India}
\email{shilpagupta890@gmail.com}

\keywords{nontrivial solution, positive solution, general open sets, fractional $(p,q)$ Laplacian, negative energy.} 
\subjclass[2010]{35J62, 35J70, 35R11, 35J20, 49J35}

\begin{abstract}
In this paper, we deal with the following $(p,q)$-fractional problem
$$
(-\Delta)^{s_{1}}_{p}u  +(-\Delta)^{s_{2}}_{q}u=\lambda P(x)|u|^{k-2}u+\theta|u|^{p_{s_{1}}^{*}-2}u \, \mbox{ in }\, \Omega,\qquad
u=0\, \mbox{ in }\,  \mathbb{R}^{N} \setminus \Omega,
$$
where $\Omega\subseteq\Rn$ is a general open set,  $0<s_{2}<s_{1}<1$,  $1<q<k<p<N/s_{1}$, parameter $\lambda,\ \theta>0$, $P$ is a nontrivial nonnegative weight, while $p_{s_{1}}^{*}=Np/(N-ps_{1})$ is the critical exponent. We prove that there exists a decreasing sequence $\{\theta_j\}_j$
 such that for any $j\in\mathbb N$ and with $\theta\in(0,\theta_j)$, there exist $\lambda_*$, $\lambda^*>0$ such that above problem admits at least $j$ distinct weak solutions with negative energy for any $\lambda\in (\lambda_*,\lambda^*)$. On the other hand, we show there exists $\overline{\lambda}>0$ such that for any $\lambda>\overline{\lambda}$, there exists $\theta^*=\theta^*(\lambda)>0$ such that the above problem admits a nonnegative weak solution with negative energy for any $\theta\in(0,\theta^*)$.\end{abstract}

\maketitle
\section{Introduction}
\setcounter{equation}{0}
In the present paper, we study the existence of weak solutions to the following nonlocal problem
\begin{equation}\label{1.1}
(-\Delta)^{s_{1}}_{p}u  +(-\Delta)^{s_{2}}_{q}u=\lambda P(x)|u|^{k-2}u+\theta|u|^{p_{s_{1}}^{*}-2}u \, \mbox{ in }\, \Omega,\quad
u=0 \, \mbox{ in } \,  \mathbb{R}^{N} \setminus \Omega,
\end{equation}
where $\Omega\subseteq\Rn$ is a general open set,  $0<s_{2}<s_{1}<1$,  $1<q<k<p<N/s_{1}$, while $p_{s_{1}}^{*}=Np/(N-ps_{1})$ is the critical exponent and parameters $\lambda,\, \theta>0$. Naturally, the condition $u=0$ in $\R^N\setminus\Omega$ disappears when $\Omega=\R^N$.
To prove the existence of weak solutions for problem \eqref{1.1}, we assume that the weight function $P:\Omega\rightarrow [0,\infty)$  is nontrivial and satisfies the following condition:
\begin{enumerate}
	\item[$(P_{0})$] $P\in  L^r(\Omega), \quad r=\frac{p_{s_{1}}^{*}}{p_{s_{1}}^{*}-k}$.
\end{enumerate}
The nonlocal fractional operators $(-\Delta)^{s_{1}}_{p}$ and $(-\Delta)^{s_{2}}_{q}$ are defined along any function $\varphi\in C_0^\infty(\mathbb R^N)$ by
\begin{align*}
(-\Delta)^{s_1}_{p}\varphi(x)=\lim\limits_{\varepsilon\rightarrow 0}\int_{R^{N}\backslash B_{\varepsilon}(x)}\dfrac{|\varphi(x)-\varphi(y)|^{p-2}(\varphi(x)-\varphi(y))}{|x-y|^{N+ps_1}}dy,\\
(-\Delta)^{s_2}_{q}\varphi(x)=\lim\limits_{\varepsilon\rightarrow 0}\int_{R^{N}\backslash B_{\varepsilon}(x)}\dfrac{|\varphi(x)-\varphi(y)|^{q-2}(\varphi(x)-\varphi(y))}{|x-y|^{N+qs_2}}dy,
\end{align*}
for $x\in\mathbb{R}^{N}$, where $B_\varepsilon(x)$ denotes the ball in $\mathbb{R}^N$ of radius $\varepsilon>0$ at the center $x\in\mathbb{R}^N$.

Recently, problem \eqref{1.1} has been studied in \cite{Bhakta} under a sublinear and a superlinear settings, namely with $k\in(1,q)$ and with $k\in(p,p_{s_1}^*)$ respectively. The aim of the present paper is to look for weak solutions of \eqref{1.1} with negative energy under a sandwich type situation, that is with $k\in(q,p)$. The sandwich type case for \eqref{1.1} is more interesting and delicate, since it is strictly related to the double $(p,q)$-growth of the main operator in \eqref{1.1}. The local version of \eqref{1.1} with $s_1=s_2=1$ has been studied in \cite{Baldelli,HoSim} under a sandwich type situation, mainly considering either $\Omega=\mathbb R^N$ or $\Omega$ bounded. In order to overcome the lack of compactness of energy functional, arising from the presence of a critical Sobolev term in \eqref{1.1}, the authors of \cite{Baldelli,HoSim} exploit a Lions' concentration compactness argument. Here, instead, we introduce a tricky step analysis which allows us to study the compactness property of energy functional of \eqref{1.1}, considering $\Omega\subseteq\mathbb R^N$ a general open set. We also recall \cite{DY}, where the authors studied the vectorial version of problem \eqref{1.1} in the bounded domain, namely the system
\begin{align}\label{system}
\begin{cases}
(-\Delta)^{s_{1}}_{p}u  +(-\Delta)^{s_{2}}_{q}u=\lambda|u|^{k-2}u+\theta\displaystyle\frac{\alpha}{\alpha+\beta}|u|^{\alpha-2}u|v|^\beta &\mbox{ in }\Omega,\\
(-\Delta)^{s_{1}}_{p}v  +(-\Delta)^{s_{2}}_{q}v=\mu|v|^{k-2}v+\theta\displaystyle\frac{\beta}{\alpha+\beta}|u|^\alpha|v|^{\beta-2}v &\mbox{ in }\Omega,\\
u=v=0 &\mbox{ in }  \mathbb{R}^{N} \setminus \Omega,
\end{cases}
\end{align}
with $\alpha>1$, $\beta>1$ and $\alpha+\beta=p_{s_{1}}^{*}$, under the sandwich situation $k\in(q,p)$. In \cite{DY}, the authors establish the existence of a mountain pass weak solution of \eqref{system} with positive energy, when $\lambda>\lambda^*$, $\mu>\mu^*$ and $\theta\in(0,\theta^*)$. However, we observe there is a conflict between the parameters given in \cite{DY}, since the thresholds 
$$\lambda^*=\lambda^*(\theta)>0,\quad \mu^*=\mu^*(\theta)>0\quad\mbox{and}\quad\theta^*=\theta^*(\lambda,\mu)>0$$
are interconnected. As far as we know, the existence of a weak solution for \eqref{1.1}, \eqref{system} with positive energy is still an open problem under the sandwich situation.

Now, we are ready to state the main results of the present paper.
\begin{theorem}\label{t1}
Let $\Omega\subseteq\Rn$ be an open set. Let $0<s_{2}<s_{1}<1$ and $1<q<k<p<N/s_{1}$. Assume that $P$ is nontrivial and satisfies $(P_{0})$.

Then, there exists a sequence $\{\theta_j\}_j$ with $\theta_j>\theta_{j+1}>0$, such that for any $j\in\mathbb N$ and with $\theta\in(0,\theta_j)$, there exist $\lambda_*$, $\lambda^*>0$ such that problem \eqref{1.1} admits at least $j$ distinct weak solutions with negative energy for any $\lambda\in (\lambda_*,\lambda^*)$.
 \end{theorem}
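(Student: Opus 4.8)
The plan is to realize the $j$ solutions as critical points of the energy functional
$$
\mathcal E_{\lambda,\theta}(u)=\frac{1}{p}[u]_{s_1,p}^p+\frac{1}{q}[u]_{s_2,q}^q-\frac{\lambda}{k}\int_\Omega P(x)|u|^k\,dx-\frac{\theta}{p_{s_1}^*}\int_\Omega|u|^{p_{s_1}^*}\,dx
$$
on the solution space $X$ (the natural intersection of fractional Sobolev-type spaces associated with $(-\Delta)^{s_1}_p$ and $(-\Delta)^{s_2}_q$ with the zero exterior condition). The key structural fact, coming from the sandwich condition $1<q<k<p$, is that near the origin the leading term in $\mathcal E_{\lambda,\theta}$ is the subhomogeneous $[u]_{s_2,q}^q/q$, which is dominated by the negative term $-\frac{\lambda}{k}\int P|u|^k$ for small $u$ (since $q<k$), so $\mathcal E_{\lambda,\theta}$ takes negative values arbitrarily close to $0$; at the same time $\mathcal E_{\lambda,\theta}$ is coercive and bounded below on $X$ because $p$ and $p_{s_1}^*$ are both larger than $k$ and the $\theta$-term can be absorbed once $\theta$ is small (the subcritical term is controlled by $(P_0)$ via Hölder and the fractional Sobolev embedding). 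Thus I would first establish: (i) $\mathcal E_{\lambda,\theta}$ is well defined, $C^1$, and coercive/bounded below on $X$ for $\theta$ in a suitable range; (ii) $\inf_X \mathcal E_{\lambda,\theta}<0$.

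The multiplicity will come from a genus-based minimax argument (Clark's theorem / Krasnoselskii genus applied to the even functional). To get $j$ solutions I would exhibit, for each $n$, an $n$-dimensional subspace $V_n\subset X$ and a small sphere $S_n\subset V_n$ on which $\mathcal E_{\lambda,\theta}<0$; since on finite-dimensional spaces all the homogeneous seminorms are comparable, on $V_n\cap\{\|u\|=\rho\}$ the negative $k$-term again beats the positive terms for $\rho$ small (using $q\le p<$ everything and $k>q$) — here is exactly where the lower bound $\lambda>\lambda_*$ enters, to guarantee the negativity is uniform on the chosen sphere, and the upper bound $\lambda<\lambda^*$ together with $\theta<\theta_j$ keeps the relevant minimax levels below the compactness threshold. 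Then the genus-defined levels
$$
c_i=\inf_{\gamma(A)\ge i}\ \sup_{u\in A}\ \mathcal E_{\lambda,\theta}(u),\qquad i=1,\dots,j,
$$
are negative, and standard deformation arguments yield $j$ critical values (counted with multiplicity, then refined to $j$ distinct solutions when some $c_i$ coincide, via the genus of the critical set).

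The main obstacle, and the technical heart of the paper, is verifying a local Palais–Smale condition below a critical energy level, because the critical exponent $p_{s_1}^*$ destroys compactness and $\Omega$ is only assumed to be a \emph{general} open set (so no boundedness, no regularity, and the standard concentration–compactness profile decomposition is not directly available). I expect this to be handled by the ``tricky step analysis'' advertised in the introduction: one shows any PS sequence is bounded (coercivity), extracts a weak limit $u$, proves $u$ solves \eqref{1.1}, and then analyzes the defect measure; the point is that the negative-energy PS levels $c_i<0$ lie strictly below the first energy level $\frac{s_1}{N}\theta^{-(N-ps_1)/(ps_1)}S^{N/(ps_1)}$ at which nontrivial concentration can occur, and moreover for the negative levels the Brezis–Lieb splitting plus the sign of $c_i$ forces the concentration part to vanish, giving strong convergence $[u_n-u]\to0$. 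Because all $c_i<0$ automatically, the PS condition at these levels needs only the easy regime (no bubbling), so the step analysis reduces to a careful but routine Brezis–Lieb argument for the $(p,q)$ operator combined with the $(P_0)$-compactness of the subcritical term. Once PS holds at each $c_i<0$, the genus machinery closes the argument and the monotone sequence $\{\theta_j\}$ is produced by tracking how small $\theta$ must be, as $j$ grows, to keep all $j$ levels negative and below the threshold.
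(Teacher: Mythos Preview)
Your overall strategy (even functional, Krasnoselskii genus, negative minimax levels, Palais--Smale below a threshold via Br\'ezis--Lieb) matches the paper, but there is a genuine gap at the very first step: the functional $\mathcal E_{\lambda,\theta}$ is \emph{not} coercive or bounded below on $X$, for any $\theta>0$. The critical term $-\frac{\theta}{p_{s_1}^*}\|u\|_{p_{s_1}^*}^{p_{s_1}^*}$ carries the \emph{largest} homogeneity exponent $p_{s_1}^*>p>k>q$, so along any ray $t\mapsto tu$ with $\|u\|_{p_{s_1}^*}>0$ one has $\mathcal E_{\lambda,\theta}(tu)\to-\infty$ as $t\to\infty$; there is nothing to absorb the $\theta$-term into. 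Consequently your genus levels $c_i=\inf_{\gamma(A)\ge i}\sup_A\mathcal E_{\lambda,\theta}$ are all $-\infty$, and the argument collapses. The paper fixes this by a \emph{truncation}: it introduces a cut-off $\eta([u]_{s_1,p})$ in front of the critical term, chosen so that the auxiliary functional $\widehat I_{\lambda,\theta}$ is bounded below, coincides with $I_{\lambda,\theta}$ on the sublevel set $\{[u]_{s_1,p}<t_0\}$, and---crucially---every point with $\widehat I_{\lambda,\theta}(u)<0$ already lies in that region (Lemma~\ref{l4.1}(i)). The genus machinery is then run for $\widehat I_{\lambda,\theta}$, and the negative critical points are a posteriori critical points of $I_{\lambda,\theta}$.

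A second, smaller issue: your description of the behaviour near $0$ is backwards. Since $q<k$, for small $t$ the positive term $t^q[u]_{s_2,q}^q/q$ \emph{dominates} the negative term $-\lambda t^k\int P|u|^k/k$, so $\mathcal E_{\lambda,\theta}$ is positive near the origin, not negative. Negativity on a sphere in a $j$-dimensional subspace $E_j$ occurs only at an intermediate scale $t^*_j$, and only when $\lambda$ exceeds a $j$-dependent threshold $\lambda_j$ (this is Lemma~\ref{l4.2}, and is indeed where $\lambda_*=\lambda_j$ enters). The upper bound $\lambda<\lambda^*$ comes from the truncation construction (so that the auxiliary function $g_{\lambda,\theta}$ has two positive zeros), and the decreasing sequence $\{\theta_j\}$ is what makes the interval $(\lambda_j,\lambda^*)$ nonempty.
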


The proof of Theorem \ref{t1} relies on a careful combination of variational and topological tools, such as truncation techniques and genus theory, similarly to \cite[Theorem 1.1]{Bhakta} under the sublinear situation. However, the sandwich growth $k\in(q,p)$ does not allow us to provide the existence of infinitely many solutions for \eqref{1.1}. For this, as in the classical case \cite[Theorem 1]{Baldelli}, in Theorem \ref{t1} we can actually prove the existence of a finite number of solutions for \eqref{1.1}, at very least. Indeed, as in \cite[Theorem 1.1]{Bhakta}, we can construct a monotone non-decreasing sequence $\{c_j\}_j$ of critical values by Krasnoselskii's genus theory. However, we can guarantee that the values $c_1\leq c_2\leq\ldots\leq c_j$ are negative just when $\theta<\theta_j$ and $\lambda>\lambda_*$, with possibly $\theta_j\to0$ and $\lambda_*=\lambda_*(\theta_j)\to\infty$ as $j\to\infty$. This is the crucial difference with respect to the sublinear case \cite[Theorem 1.1]{Bhakta}, where the $\{c_j\}_j$ are all negative when $\lambda<\lambda^*$ is small. In any case, with respect to \cite[Theorem 1]{Baldelli} and \cite[Theorem 1.1]{Bhakta}, we are able to set our \eqref{1.1} on $\Omega\subseteq\mathbb R^N$ general, and not just $\Omega=\mathbb R^N$ or $\Omega$ bounded. 

In the next result, we study \eqref{1.1} whenever $\lambda$ is sufficiently large, a situation which is not covered by Theorem \ref{t1}.

\begin{theorem}\label{t3}
Let $\Omega\subseteq\Rn$ be an open set. Let $0<s_{2}<s_{1}<1$ and $1<q<k<p<N/s_{1}$. Assume that $P$ is nontrivial, nonnegative and satisfy $(P_{0})$.
	
Then, there exists $\overline{\lambda}>0$ such that for any $\lambda>\overline{\lambda}$, there exists $\theta^*=\theta^*(\lambda)>0$ such that problem \eqref{1.1} admits a nonnegative weak solution with negative energy for any $\theta\in(0,\theta^*)$.
\end{theorem}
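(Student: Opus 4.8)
The plan is to produce the solution as a \emph{negative-energy constrained minimizer} of a sign-truncated energy functional; the genuine difficulty is the loss of compactness caused by the critical term $\theta|u|^{p_{s_1}^{*}-2}u$.

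Denote by $W$ the natural solution space for \eqref{1.1} (the intersection of the two homogeneous fractional spaces), by $[u]_{s_1,p}$, $[u]_{s_2,q}$ the associated Gagliardo seminorms, and let $S>0$ be the best constant in $\|u\|_{L^{p_{s_1}^{*}}(\Omega)}^{p}\le S^{-1}[u]_{s_1,p}^{p}$. Weak solutions of \eqref{1.1} are the critical points of
\[
J_{\lambda,\theta}(u)=\tfrac1p[u]_{s_1,p}^{p}+\tfrac1q[u]_{s_2,q}^{q}-\tfrac\lambda k\int_\Omega P|u|^{k}\,dx-\tfrac\theta{p_{s_1}^{*}}\int_\Omega|u|^{p_{s_1}^{*}}\,dx .
\]
To force nonnegativity I first replace $|u|^{k-2}u$, $|u|^{p_{s_1}^{*}-2}u$ by $(u^{+})^{k-1}$, $(u^{+})^{p_{s_1}^{*}-1}$, obtaining a functional $J_{\lambda,\theta}^{+}\in C^{1}(W)$; testing any critical point $u$ of $J_{\lambda,\theta}^{+}$ against $-u^{-}$ and using the elementary monotonicity inequalities for $(-\Delta)^{s_1}_{p}$ and $(-\Delta)^{s_2}_{q}$ yields $u^{-}\equiv 0$, so such a $u$ is a nonnegative weak solution of \eqref{1.1}.

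From $(P_{0})$ (H\"older with $r=p_{s_1}^{*}/(p_{s_1}^{*}-k)$) and the Sobolev inequality one gets $J_{\lambda,\theta}^{+}(u)\ge\psi_\theta\big([u]_{s_1,p}\big)$, where
\[
\psi_\theta(m):=\tfrac1p m^{p}-\tfrac{C\lambda}{k}m^{k}-\tfrac{\theta S^{-p_{s_1}^{*}/p}}{p_{s_1}^{*}}m^{p_{s_1}^{*}},\qquad C=C\big(N,p,q,s_1,s_2\big)\|P\|_{L^{r}(\Omega)} .
\]
Since $q<k<p<p_{s_1}^{*}$, the maximum over $m>0$ of $\tfrac1p m^{p-k}-\tfrac{\theta S^{-p_{s_1}^{*}/p}}{p_{s_1}^{*}}m^{p_{s_1}^{*}-k}$ is of order $\theta^{-(p-k)/(p_{s_1}^{*}-p)}\to+\infty$ as $\theta\to0^{+}$, so for each $\lambda$ there is $\theta_1(\lambda)>0$ such that, for $\theta<\theta_1(\lambda)$, the set $\{\psi_\theta>0\}$ is a non-degenerate interval $(\alpha_\theta,\beta_\theta)$ with $0<\alpha_\theta<\beta_\theta$; comparing $\psi_\theta$ with its $m^{p}$--$m^{p_{s_1}^{*}}$ part one checks that $\beta_\theta<R_0(\theta):=\big(p_{s_1}^{*}S^{p_{s_1}^{*}/p}/(p\theta)\big)^{1/(p_{s_1}^{*}-p)}$ — this room below the Sobolev threshold is what later supplies compactness. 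Next fix $u_0\in W$, $u_0\ge0$, with $\int_\Omega P u_0^{k}\,dx>0$ (possible since $P$ is nontrivial and nonnegative): for $t>0$ fixed, $J_{\lambda,\theta}^{+}(tu_0)<0$ as soon as $\lambda$ exceeds a bound depending only on $u_0$, $P$ and the exponents, not on $\theta$; since moreover $\alpha_\theta\to\big(pC\lambda/k\big)^{1/(p-k)}$ as $\theta\to0^{+}$, enlarging that bound to some $\overline{\lambda}$ we may also guarantee $[u_0]_{s_1,p}<\alpha_\theta$ for all $\theta$ small. Now fix $\lambda>\overline{\lambda}$, choose $\theta^{*}=\theta^{*}(\lambda)\le\theta_1(\lambda)$ accordingly, pick $R=R(\theta)\in(\alpha_\theta,\beta_\theta)$ (so $[u_0]_{s_1,p}<R<R_0(\theta)$), and set $c:=\inf\{J_{\lambda,\theta}^{+}(u):u\in W,\ [u]_{s_1,p}\le R\}$. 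Then $c\le J_{\lambda,\theta}^{+}(u_0)<0$, whereas $J_{\lambda,\theta}^{+}\ge\psi_\theta(R)>0$ on $\{[u]_{s_1,p}=R\}$.

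The crux is that $c$ is attained. Let $u_n$ be a minimizing sequence: it is bounded in $W$ (the constraint bounds $[u_n]_{s_1,p}$, and $J_{\lambda,\theta}^{+}(u_n)\le\text{const}$ with the estimate above then bounds $[u_n]_{s_2,q}$), so $u_n\rightharpoonup u^{*}$ in $W$, and by the local compactness of the embedding together with $(P_{0})$ one gets $\int_\Omega P(u_n^{+})^{k}\to\int_\Omega P(u^{*+})^{k}$. Applying the Brezis--Lieb lemma to the Gagliardo seminorms and to the critical term, and bounding $\int_\Omega|u_n^{+}-u^{*+}|^{p_{s_1}^{*}}\le S^{-p_{s_1}^{*}/p}[u_n-u^{*}]_{s_1,p}^{p_{s_1}^{*}}$, one reaches, along a subsequence with $b_n:=[u_n-u^{*}]_{s_1,p}^{p}\to b$,
\[
\tfrac1p b-\tfrac{\theta S^{-p_{s_1}^{*}/p}}{p_{s_1}^{*}}\,b^{p_{s_1}^{*}/p}\ \le\ c-J_{\lambda,\theta}^{+}(u^{*})\ \le\ 0 ,
\]
hence $b=0$ or $b\ge R_0(\theta)^{p}$. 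But $b\le\liminf[u_n]_{s_1,p}^{p}\le R^{p}<R_0(\theta)^{p}$, forcing $b=0$; feeding this back gives $[u_n-u^{*}]_{s_2,q}\to0$ as well, so $u_n\to u^{*}$ strongly and $J_{\lambda,\theta}^{+}(u^{*})=c<0$. Since $J_{\lambda,\theta}^{+}>0$ on $\{[u]_{s_1,p}=R\}$ we get $[u^{*}]_{s_1,p}<R$, i.e. $u^{*}$ minimizes $J_{\lambda,\theta}^{+}$ over the open set $\{[u]_{s_1,p}<R\}$ and is therefore a free critical point; as $J_{\lambda,\theta}^{+}(u^{*})<0=J_{\lambda,\theta}^{+}(0)$ it is nontrivial, hence a nonnegative weak solution of \eqref{1.1} with negative energy. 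I expect the last compactness step to be the main obstacle: the critical term destroys the weak lower semicontinuity of $J_{\lambda,\theta}^{+}$, and it is restored only thanks to $R<R_0(\theta)$, a condition made available by taking $\theta$ small for fixed large $\lambda$. The same conclusion can be reached by truncating the critical term for $[u]_{s_1,p}>R$ to obtain a globally coercive functional, or by running Ekeland's variational principle on $\{[u]_{s_1,p}\le R\}$ and verifying the Palais--Smale condition below the first critical level; all routes rest on the inequality $\beta_\theta<R_0(\theta)$ and on keeping $\overline{\lambda}$ independent of $\theta$ while letting $\theta^{*}$ depend on $\lambda$.
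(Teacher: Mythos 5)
Your proof is correct, and it takes a genuinely different route from the paper's. You minimize the sign-truncated functional directly over the slab $\{u\in E:\,[u]_{s_1,p}\le R\}$, with $R$ chosen strictly below the Sobolev threshold $R_0(\theta)$, and recover compactness through a Brezis--Lieb decomposition: the cap $R<R_0(\theta)$ forces the bubble $b=\lim_n[u_n-u^*]_{s_1,p}^p$ to vanish, after which the strong convergence of $[u_n-u^*]_{s_2,q}$ follows for free. The paper instead works on a genuine ball $B_\varrho(0)=\{\|u\|_E\le\varrho\}$ in the full intersection norm $\|u\|_E=[u]_{s_1,p}+[u]_{s_2,q}$: Lemma~\ref{negativo} establishes a valley on $\partial B_\varrho(0)$ through a two-case analysis playing off whichever seminorm is large, Ekeland's principle then produces a Palais--Smale sequence at the negative level $c_\varrho$, and compactness is delegated to the general $(PS)_c$ Lemma~\ref{l5} via the energy-level threshold $\overline{c}(\lambda,\theta)$ of \eqref{cappello} (which in turn requires $\theta<\widetilde\theta$). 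Your route is more self-contained---no Ekeland, no explicit level estimate $\overline{c}$, no two-case valley argument---and it also shows, somewhat against the stated motivation in the introduction, that the $[u]_{s_1,p}$-ball constraint attributed there to the bounded-domain setting of Ho--Sim in fact works on general open $\Omega$, once one verifies (as you correctly do) that the energy bound controls $[u_n]_{s_2,q}$ along the minimizing sequence so that a weak limit exists in $E$. What you trade away is the sharp form of $\overline{\lambda}$ in \eqref{alpha*}: you fix a single test function $u_0$ rather than optimizing over $\mathcal{A}_+$, which is immaterial for the statement. One point worth making explicit: enlarging $\overline{\lambda}$ to ensure $[tu_0]_{s_1,p}<\alpha_\theta$ simultaneously shrinks the interval $\{\psi_\theta>0\}$, so the threshold $\theta_1(\lambda)$ must be decreased in step; this is exactly why $\theta^*$ depends on $\lambda$, consistent with the theorem, and your closing remarks already flag it.
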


The proof of Theorem \ref{t3} relies on a minimization argument combined with the Ekeland variational principle. Theorem \ref{t3} generalizes the local result stated in \cite[Theorem 1.1]{HoSim} in a fractional setting, allowing $\Omega\subseteq\Rn$ to be a general open set. Indeed, in \cite[Theorem 1.1]{HoSim} they mainly assume $\Omega$ bounded, in order to get the trivial intersection  $W_0^{1,p}(\Omega)\cap W_0^{1,q}(\Omega)=W_0^{1,p}(\Omega)$, with $q<p$. In this way, they minimize the energy functional on the ball
$$
B_\varrho(0)=\left\{u\in W_0^{1,p}(\Omega):\,\,\|\nabla u\|_p\leq \varrho\right\},
$$
that is neglecting the norm $\|\nabla u\|_q$. In Theorem \ref{t3}, we set our variational problem on a nontrivial intersection space $E$, endowed by norm $\|u\|_{E}=[u]_{s_{1},p}+[u]_{s_{2},q}$, as explained in Section \ref{sec2}.

The paper is organized as follows. In Section \ref{sec2}, we discuss the variational formulation of \eqref{1.1}, proving the compactness property of the energy functional. In Sections \ref{sec3} and \ref{sec4}, we prove Theorems \ref{t1} and \ref{t3}, respectively.

\section{Fractional Sobolev Spaces and Functional Framework}\label{sec2}

Let $\Omega$ be any open set of $\Rn$, let $s\in(0,1)$ and $m\in(1,\infty)$. We denote with $\|\cdot\|_{m}$ the norm of the space $L^{m}(\Omega)$.

We recall the fractional Sobolev space of parameters $s$ and $m$ as
$$
W^{s,m}(\Omega)=\left\{ u\in L^{m}(\Omega):\,\,\dfrac{|u(x)-u(y)|}{|x-y|^{\frac{N}{m}+s}}\in L^{m}(\Omega\times\Omega) \right\}.
$$
We define here the density space $Z^{s,m}(\Omega)$, given as the completion of $C_0^\infty(\Omega)$ with respect to the norm
 $$
[u]_{s,m}=\left( \iint_{\R^{2N}}\dfrac{|u(x)-u(y)|^{m}}{|x-y|^{N+sm}}dx dy\right) ^{1/m}.
$$
Even if $Z^{s,m}(\Omega)$ is not a real space of functions, but a density one, the choice of it is an improvement with respect to the space
$$
X_0^{s,m}(\Omega)=\left\{u\in W^{s,m}(\mathbb R^N):\,\,u=0\mbox{ a.e. in }\mathbb{R}^N\setminus
\Omega\right\},
$$
fairly popular in recent papers devoted to nonlocal variational problems.
Indeed, the density result proved in \cite[Theorem~6]{fsv} does not hold true for $X_0^{s,m}(\Omega)$, without assuming more restrictive conditions on the open
bounded set $\Omega$ and on its boundary $\partial\Omega$;
see in particular \cite[Remark 7]{fsv}. In conclusion, if $\Omega$ is an open bounded subset of $\mathbb{R}^N$, then $Z^{s,m}(\Omega)\subset X_0^{s,m}(\Omega)$, with possibly $Z^{s,m}(\Omega)\not= X_0^{s,m}(\Omega)$.

Furthermore, we recall the fractional Beppo-Levi space of parameters $s$ and $m$ as
\begin{equation}\label{bl}
D^{s,m}(\R^{N})=\left\{u\in L^{m_{s}^{*}}(\R^{N}):\,\, [u]_{s,m}<\infty\right\}
\end{equation}
with $m_{s}^{*}=Nm/(N-sm)$ when $N>sm$. It is worth noting that if $\Omega$ is any open subset of $\mathbb R^N$ and
$\widetilde u$ denotes the natural extension of any $u\in Z^{s,m}(\Omega)$, then
$\widetilde u\in D^{s,m}(\mathbb R^N)$. In other words,
\begin{equation}\label{notbl}
Z^{s,m}(\Omega)\subset\left\{u\in L^{m^*_s}(\Omega):\,\,\widetilde u\in D^{s,m}(\mathbb R^N)\right\}
\end{equation}
and equality holds when either $\Omega=\mathbb R^N$ or $\partial\Omega$ is continuous, as shown in \cite[Theorem~1.4.2.2]{G}. In what follows, with abuse of notation, we continue to write $u$ in place of $\widetilde u$, since the context is clear.

Due to the presence of a fractional $(p,q)$-Laplacian in problem \eqref{1.1}, the solution space is given by
$$
E=Z^{s_{1},p}(\Omega)\cap Z^{s_{2},q}(\Omega),
$$
which is a reflexive Banach space endowed with the norm
$$\|u\|_{E}=[u]_{s_{1},p}+[u]_{s_{2},q}.$$
We point out that when $\Omega$ is bounded, we can reduce $E=Z^{s_{1},p}(\Omega)$ thanks to \cite[Proposition 2.1]{valpal}, since $s_1>s_2$ and $p>q$. While, when $\Omega=\R^{N}$, we obtain that
$$
E=Z^{s_{1},p}(\R^N)\cap Z^{s_{2},q}(\R^N)=D^{s_{1},p}(\R^N)\cap D^{s_{2},q}(\R^N),
$$
thanks to \eqref{bl} and \eqref{notbl}.

Furthermore, in order to handle the critical Sobolev term in \eqref{1.1}, we set the fractional Sobolev constant as
\begin{equation}\label{a2}
S:=\inf_{u\in Z^{s_{1},p}(\Omega)\backslash\{0\}}\dfrac{[u]_{s_{1},p}^{p}}{\|u\|^{p}_{p_{s_{1}}^{*}}},
\end{equation}
which is strictly positive and well defined thanks to \cite[Theorem 5.6]{valpal}.

From now on, we assume that $0<s_{2}<s_{1}<1$, $1<q<k<p<N/s_{1}$, $(P_0)$ and $(Q_0)$ hold true, without further mentioning.
Thus, problem \eqref{1.1} has a variational structure with the energy functional $I_{\lambda,\theta}:E\rightarrow \R$ given by
\begin{equation*}
\begin{split}
I_{\lambda,\theta}(u)=\frac{1}{p}[u]_{s_{1},p}^{p}+\frac{1}{q}[u]_{s_{2},q}^{q}-\frac{\lambda}{k}\int_{\Omega}  P(x)|u|^{k}dx-\frac{\theta}{p_{s_{1}}^{*}}\|u\|^{p_{s_{1}}^{*}}_{p_{s_{1}}^{*}}.
\end{split}
\end{equation*}
The functional $I_{\lambda,\theta}$  is well defined and $I_{\lambda,\theta}\in C^{1}(E)$ following a similar argument as in \cite[Lemma 1 and Lemma 2]{Baldelli}.
 We see that $u\in E$ is a weak solution to \eqref{1.1} if and only if $\langle I'_{\lambda,\theta}(u),v\rangle=0$ for any $v\in E$, where
\begin{equation*}
\begin{split}
\langle I'_{\lambda,\theta}(u),v\rangle=\langle u,v\rangle_{s_1,p}+\langle u,v\rangle_{s_2,q}
-\lambda\int_{\Omega}  P(x)|u|^{k-2}uv dx-\theta\int_{\Omega}|u|^{p_{s_{1}}^{*}-2}u  v dx.
\end{split}
\end{equation*}
with
\begin{equation*}
\begin{split}
\langle u,v\rangle_{s_1,p}&=\iint_{\R^{2N}} \dfrac{|u(x)-u(y)|^{p-2}(u(x)-u(y))(v(x)-v(y))}{|x-y|^{N+ps_1}}dx dy,\\
\langle u,v\rangle_{s_2,q}&=\iint_{\R^{2N}} \dfrac{|u(x)-u(y)|^{q-2}(u(x)-u(y))(v(x)-v(y))}{|x-y|^{N+qs_2}}dx dy.
\end{split}
\end{equation*}
We first study the compactness property for $I_{\lambda,\theta}$, that is the Palais-Smale condition. For this, we say that sequence $\{u_{n}\}_n\subset E$ is a Palais-Smale sequence for $I_{\lambda,\theta}$ at level $c\in\R$ if
\begin{equation}\label{ps}
I_{\lambda,\theta}(u_n)\to c\,\mbox{ and }\,I'_{\lambda,\theta}(u_n)\to 0\,\mbox{ in }\,E^*\,\mbox{ as }\,n\to\infty.
\end{equation}
We say that $I_{\lambda,\theta}$ satisfies the Palais-Smale condition at a level $c\in\R$, $(PS)_{c}$ for short, if any Palais-Smale sequence $\{u_{n}\}_n$  admits a convergent subsequence.

Let us set $\overline{c}=\overline{c}(\lambda,\theta)$ as
\begin{equation}\label{cappello}
\overline{c}:=\left(\frac{1}{p}-\frac{1}{p_{s_1}^*}\right)\left[
	\frac{ S^{\frac{p_{s_1}^*}{p_{s_1}^*-p}}}{\theta^{\frac{p}{p_{s_1}^*-p}}}
	-\lambda^{\frac{p_{s_1}^*}{p_{s_1}^*-k}}\theta^{\frac{-k}{p_{s_1}^*-k}}\|P\|_{r}^{\frac{p_{s_1}^*}{p_{s_1}^*-k}}
	\cdot\frac{p_{s_1}^*-k}{k}\cdot\left(\frac{p-k}{p_{s_1}^*-p}\right)^{\frac{p_{s_1}^*}{p_{s_1}^*-k}}\right]
\end{equation}
with $S$ given in \eqref{a2}. We are able to prove the Palais-Smale condition under the threshold given in \eqref{cappello}.

\begin{lemma}\label{l5}
Let $\lambda>0$ and $\theta>0$. Then, functional $I_{\lambda,\theta}$ satisfies the $(PS)_{c}$ condition for any $c<\overline{c}$, with $\overline{c}$ such as in \eqref{cappello}.
\end{lemma}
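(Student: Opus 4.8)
\emph{Step 1 (boundedness).} The plan is to take a Palais--Smale sequence $\{u_n\}_n\subset E$ at a level $c<\overline c$ and to extract a subsequence converging strongly in $E$; the heart of the matter is excluding concentration below the threshold \eqref{cappello}. First I would show $\{u_n\}_n$ is bounded in $E$: from \eqref{ps} one has $I_{\lambda,\theta}(u_n)-\tfrac{1}{p^*_{s_1}}\langle I'_{\lambda,\theta}(u_n),u_n\rangle=c+o(1)\|u_n\|_E$, and since $q<p<p^*_{s_1}$ the left-hand side equals
$$
\Big(\tfrac1p-\tfrac1{p^*_{s_1}}\Big)[u_n]_{s_1,p}^p+\Big(\tfrac1q-\tfrac1{p^*_{s_1}}\Big)[u_n]_{s_2,q}^q-\lambda\Big(\tfrac1k-\tfrac1{p^*_{s_1}}\Big)\int_\Omega P|u_n|^k\,dx.
$$
Estimating the integral by H\"older with $(P_0)$ and by \eqref{a2}, i.e.\ $\int_\Omega P|u_n|^k\,dx\le \|P\|_r S^{-k/p}[u_n]_{s_1,p}^k$, and absorbing it with Young's inequality (here $k<p$ is used), one gets $[u_n]_{s_1,p}^p+[u_n]_{s_2,q}^q\le C(1+\|u_n\|_E)$; as $p,q>1$, this yields boundedness.

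\emph{Step 2 (the weak limit solves \eqref{1.1}).} Passing to a subsequence, $u_n\rightharpoonup u$ in $E$ and, by the compact embedding $E\hookrightarrow L^p_{\mathrm{loc}}(\R^N)$ and a diagonal argument, $u_n\to u$ a.e.\ in $\R^N$. To pass to the limit in $\langle I'_{\lambda,\theta}(u_n),v\rangle=o(1)$ for fixed $v\in E$, I would argue that $|u_n(x)-u_n(y)|^{p-2}(u_n(x)-u_n(y))\,|x-y|^{-(N+ps_1)/p'}$ is bounded in $L^{p'}(\R^{2N})$ and converges a.e., hence weakly, which gives $\langle u_n,v\rangle_{s_1,p}\to\langle u,v\rangle_{s_1,p}$, and similarly $\langle u_n,v\rangle_{s_2,q}\to\langle u,v\rangle_{s_2,q}$; the critical term converges because $|u_n|^{p^*_{s_1}-2}u_n\rightharpoonup|u|^{p^*_{s_1}-2}u$ in $L^{(p^*_{s_1})'}(\Omega)$. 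For the subcritical term, I would exploit that $(P_0)$ supplies compactness even on an unbounded $\Omega$: splitting $\Omega=(\Omega\cap B_R)\cup(\Omega\setminus B_R)$, on $\Omega\cap B_R$ one uses $|u_n|^{k-2}u_n\rightharpoonup|u|^{k-2}u$ in $L^{p^*_{s_1}/(k-1)}$ tested against $Pv\in L^{(p^*_{s_1}/(k-1))'}$, while the tail is dominated by $\|P\|_{L^r(\Omega\setminus B_R)}\|u_n\|_{p^*_{s_1}}^{k-1}\|v\|_{p^*_{s_1}}\to0$ as $R\to\infty$; the same splitting gives $\int_\Omega P|u_n|^k\,dx\to\int_\Omega P|u|^k\,dx$. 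Thus $\langle I'_{\lambda,\theta}(u),v\rangle=0$ for every $v\in E$.

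\emph{Step 3 (Brezis--Lieb splitting and exclusion of concentration).} Set $v_n:=u_n-u$. By the Brezis--Lieb lemma (on $\R^{2N}$ with the measures $|x-y|^{-(N+ps_1)}dx\,dy$ and $|x-y|^{-(N+qs_2)}dx\,dy$, and in $L^{p^*_{s_1}}(\Omega)$) together with $\int_\Omega P|u_n|^k\,dx\to\int_\Omega P|u|^k\,dx$, the relations $\langle I'_{\lambda,\theta}(u_n),u_n\rangle=o(1)$ and $\langle I'_{\lambda,\theta}(u),u\rangle=0$ yield
$$
[v_n]_{s_1,p}^p+[v_n]_{s_2,q}^q-\theta\|v_n\|_{p^*_{s_1}}^{p^*_{s_1}}=o(1),
$$
and $I_{\lambda,\theta}(u_n)\to c$ becomes, along a further subsequence with $[v_n]_{s_1,p}^p\to\ell$ and $[v_n]_{s_2,q}^q\to\ell'$,
$$
c=I_{\lambda,\theta}(u)+\Big(\tfrac1p-\tfrac1{p^*_{s_1}}\Big)\ell+\Big(\tfrac1q-\tfrac1{p^*_{s_1}}\Big)\ell'\ \ge\ I_{\lambda,\theta}(u)+\Big(\tfrac1p-\tfrac1{p^*_{s_1}}\Big)\ell.
$$
Then I would argue by contradiction: if $v_n\not\to0$ in $E$, the displayed $o(1)$--identity and \eqref{a2} force $\ell>0$ and even $\ell\ge S^{p^*_{s_1}/(p^*_{s_1}-p)}\theta^{-p/(p^*_{s_1}-p)}$; meanwhile, using $\langle I'_{\lambda,\theta}(u),u\rangle=0$, H\"older with $(P_0)$, and discarding the nonnegative $[u]_{s_2,q}^q$,
$$
I_{\lambda,\theta}(u)=I_{\lambda,\theta}(u)-\tfrac1p\langle I'_{\lambda,\theta}(u),u\rangle\ \ge\ \theta\Big(\tfrac1p-\tfrac1{p^*_{s_1}}\Big)\|u\|_{p^*_{s_1}}^{p^*_{s_1}}-\lambda\Big(\tfrac1k-\tfrac1p\Big)\|P\|_r\|u\|_{p^*_{s_1}}^{k},
$$
and a one-variable minimization over $\|u\|_{p^*_{s_1}}\ge0$ shows the right-hand side is at least $-(\tfrac1p-\tfrac1{p^*_{s_1}})$ times the subtracted term inside the bracket of \eqref{cappello}. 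Inserting these two bounds into the expression for $c$ gives $c\ge\overline c$, contradicting $c<\overline c$. Hence $v_n\to0$ in the $(s_1,p)$--seminorm, whence the $o(1)$--identity forces $[v_n]_{s_2,q}\to0$ too, i.e.\ $u_n\to u$ in $E$, which is the assertion.

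The main obstacle, and the place where the generality of $\Omega$ bites, is the lack of global compactness: it is needed in Step 2 to pass to the limit in the subcritical term $\int_\Omega P|u_n|^{k-2}u_nv\,dx$ (and implicitly in the Brezis--Lieb step), and it is precisely for this reason that $(P_0)$ is imposed with the sharp exponent $r=p^*_{s_1}/(p^*_{s_1}-k)$, so that the localization-plus-tail estimate can replace a Rellich-type theorem. The remaining care is purely computational: carrying out the minimization in Step 3 accurately enough that the concentration alternative is ruled out on exactly the range $c<\overline c$ prescribed by \eqref{cappello}.
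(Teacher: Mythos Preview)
Your proof is correct and follows essentially the same route as the paper's: both establish boundedness from $I_{\lambda,\theta}(u_n)-\tfrac{1}{p^*_{s_1}}\langle I'_{\lambda,\theta}(u_n),u_n\rangle$ together with $(P_0)$ and \eqref{a2}, then use Brezis--Lieb splitting and a one-variable minimization to rule out concentration below the level \eqref{cappello}. The only organizational differences are that the paper treats boundedness by a three-case analysis rather than Young's inequality, and that the paper works directly with $I_{\lambda,\theta}(u_n)-\tfrac{1}{p}\langle I'_{\lambda,\theta}(u_n),u_n\rangle$ and passes to the limit instead of first establishing that the weak limit $u$ is itself a critical point; both variants lead to the same inequality $c\ge\overline c$ and hence to the same contradiction.
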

 \begin{proof}
Fix $\lambda>0$, $\theta>0$ and let $\{u_{n}\}_n\subset E$ be a $(PS)_{c}$ sequence of $I_{\lambda,\theta}$. By $(P_0)$ and \eqref{a2}, we have
\begin{align*}
\begin{split}
I_{\lambda,\theta}(u_n)-\frac{1}{p_{s_{1}}^{*}}\langle I'_{\lambda,\theta}(u_n),u_n\rangle&=\left( \frac{1}{p}-\frac{1}{p_{s_{1}}^{*}}\right) [u_{n}]_{s_{1},p}^{p}+\left( \frac{1}{q}-\frac{1}{p_{s_{1}}^{*}}\right)[u_{n}]_{s_{2},q}^{q}\\
&\quad-\lambda\left( \frac{1}{k}-\frac{1}{p_{s_{1}}^{*}}\right)\int_\Omega P(x)|u_n|^kdx\\
&\geq\left( \frac{1}{p}-\frac{1}{p_{s_{1}}^{*}}\right) [u_{n}]_{s_{1},p}^{p}+\left( \frac{1}{q}-\frac{1}{p_{s_{1}}^{*}}\right)[u_{n}]_{s_{2},q}^{q}\\
&\quad-\lambda\left( \frac{1}{k}-\frac{1}{p_{s_{1}}^{*}}\right)S^{\frac{-k}{p}}\|P\|_{r}[u_{n}]_{s_{1},p}^{k}.
\end{split}
\end{align*}
Thus, by \eqref{ps} there exists $\beta_{\lambda,\theta}$ such that
\begin{align}\label{b1}
\begin{split}
   c+\beta_{\lambda,\theta}\|u_{n}\|_E+o(1)&\geq\left( \frac{1}{p}-\frac{1}{p_{s_{1}}^{*}}\right) [u_{n}]_{s_{1},p}^{p}+\left( \frac{1}{q}-\frac{1}{p_{s_{1}}^{*}}\right)[u_{n}]_{s_{2},q}^{q}\\
&\quad-\lambda\left( \frac{1}{k}-\frac{1}{p_{s_{1}}^{*}}\right)S^{\frac{-k}{p}}\|P\|_{r}[u_{n}]_{s_{1},p}^{k},
\end{split}
\end{align}
as $n\to\infty$.

We claim that $\{u_{n}\}_n$ is bounded in $E$. Assume for contradiction that $\|u_{n}\|_{E}\rightarrow \infty$. Then, passing if necessary to a subsequence, still labeled by $\{u_{n}\}_n$, at least one component has norm diverging as $n$ diverges. Hence we distinguish three situations.
\\

\begin{itemize}
\item \textbf{Case 1.} \emph{$[u_{n}]_{s_{1},p}\rightarrow \infty$ and $[u_{n}]_{s_{2},q}$ is bounded.}
\\

Dividing \eqref{b1} by $[u_{n}]_{s_{1},p}^{k}$, we get
$$
\frac{c}{[u_{n}]_{s_{1},p}^{k}}+\beta_{\lambda,\theta}\left(\frac{1}{[u_{n}]_{s_{1},p}^{k-1}}+\frac{[u_{n}]_{s_{2},q}}{[u_{n}]_{s_{1},p}^{k}}\right)+o(1)\geq \left(\frac{1}{p}-\frac{1}{p_{s_{1}}^{*}}\right) [u_{n}]_{s_{1},p}^{p-k}-\lambda\left( \frac{1}{k}-\frac{1}{p_{s_{1}}^{*}}\right)S^{\frac{-k}{p}}\|P\|_{r}
$$
which is a contradiction since $p>k$.

\vspace{0.1cm}
\item \textbf{Case 2.} \emph{$[u_{n}]_{s_{2},q}\rightarrow \infty$ and $[u_{n}]_{s_{1},p}$ is bounded.}
\\

Dividing \eqref{b1} by $[u_{n}]_{s_{2},q}^{q}$, we get
$$
\frac{c}{[u_{n}]_{s_{2},q}^{q}}+\beta_{\lambda,\theta}\left(\frac{[u_{n}]_{s_{1},p}^{p}}{[u_{n}]_{s_{2},q}^{q}}+\frac{1}{[u_{n}]_{s_{2},q}^{q-1}}\right)+o(1) \geq \left( \frac{1}{q}-\frac{1}{p_{s_{1}}^{*}}\right)-\lambda\left( \frac{1}{k}-\frac{1}{p_{s_{1}}^{*}}\right)S^{\frac{-k}{p}}\|P\|_{r}\frac{[u_{n}]_{s_{1},p}^{k}}{[u_{n}]_{s_{2},q}^{q}}
$$
which is a contradiction since $p_{s_{1}}^{*}>q$.

\vspace{0.1cm}
\item \textbf{Case 3.} \emph{$[u_{n}]_{s_{1},p}\rightarrow \infty$ and $[u_{n}]_{s_{2},q}\rightarrow \infty$.}
\\

We can rewrite \eqref{b1} as
\begin{align*}
\begin{split}
	c+\beta_{\lambda,\theta}[u_{n}]_{s_{2},q}-\left( \frac{1}{q}-\frac{1}{p_{s_{1}}^{*}}\right)[u_{n}]_{s_{2},q}^{q}+o(1)&\geq\left( \frac{1}{p}-\frac{1}{p_{s_{1}}^{*}}\right) [u_{n}]_{s_{1},p}^{p}\\
	&\quad-\lambda\left( \frac{1}{k}-\frac{1}{p_{s_{1}}^{*}}\right)S^{\frac{-k}{p}}\|P\|_{r}[u_{n}]_{s_{1},p}^{k}-\beta_{\lambda,\theta}[u_{n}]_{s_{1},p}
\end{split}
\end{align*}
which is a contradiction since, simultaneously, we have $q>1$ and $p>k>1$.
\end{itemize}

\noindent
Thus,  $\{u_{n}\}_n$ is bounded in $E$ and by the reflexivity of $E$, considering that $Z^{s_1,p}(\Omega)\hookrightarrow L^k(\Omega,P)$ compactly thanks to $(P_0)$ and  \cite[Lemma 4.1]{FPk}, there exists a subsequence, still denoted by $\{u_n\}_n$, and a function $u\in E$
such that
\begin{equation}\label{e2.4}
\begin{array}{ll}
u_n\rightharpoonup u\text{ in }E,\quad & u_n\rightharpoonup u\mbox{ in } L^{p_{s_1}^*}(\Omega), \\
u_n\rightarrow u\mbox{ in } L^{k}(\Omega,P), &u_n\to u\text{ a.e.  in }\Omega
\end{array}
\end{equation}
as $n\to\infty$.

Furthermore, as shown in the proof of \cite[Lemma~2.4]{CP},
by \eqref{e2.4} the sequence $\left\{\mathcal U_n\right\}_{n}$, defined in $\mathbb R^{2N}\setminus\mbox{Diag}\,\mathbb R^{2N}$ by
	\[(x,y)\mapsto\mathcal U_n(x,y)=\frac{|u_n(x)-u_n(y)|^{p-2}
(u_n(x)-u_n(y))}{|x-y|^{\frac{N+ps_1}{p'}}},
\]
is bounded in $L^{p'}(\mathbb R^{2N})$ as well as $\mathcal U_n\rightarrow\mathcal U$ a.e. in $\mathbb R^{2N}$, where
	\[\mathcal U(x,y)=\frac{|u(x)
-u(y)|^{p-2}(u(x)
-u(y))}{|x-y|^{\frac{N+ps_1}{p'}}}
\] and $p'$ is conjugate of $p$.
Thus, up to a subsequence, we get $\mathcal U_n\rightharpoonup\mathcal U$ in $L^{p'}(\mathbb R^{2N})$, and so as $n\to\infty$
\begin{equation}\label{e2.5}
\langle u_n,\varphi\rangle_{s_1,p}\rightarrow\langle u,\varphi\rangle_{s_1,p}
\end{equation}
for any $\varphi\in E$,
since $|\varphi(x)-\varphi(y)|\cdot|x-y|^{-\frac{N+ps_1}{p}}\in L^p(\mathbb R^{2N})$.
Similarly, we get as $n\to\infty$
\begin{equation}\label{e2.6}
\langle u_n,\varphi\rangle_{s_2,q}\rightarrow\langle u,\varphi\rangle_{s_2,q}\
\end{equation}
for any $\varphi\in E$.
Thanks to \eqref{e2.4}, using H\"older inequality it results
\begin{equation}\label{e2.8}
\lim_{n\to\infty}\int_{\Omega}P(x)|u_n|^{k-2}u_n(u_n-u)dx=0.
\end{equation}
Further, using \eqref{e2.4}, it's not difficult to check that
\begin{equation}\label{13-8-24-1}
|u_n|^{p^*_{s_1}-2}u_n \rightharpoonup |u|^{p^*_{s_1}-2}u \quad\mbox{ in }\,  L^{(p^*_{s_1})'}(\Omega).
\end{equation}
Consequently, using \eqref{ps}, \eqref{e2.4}--\eqref{13-8-24-1} we deduce that, as $n\to\infty$

\vspace{10mm}

\begin{align}\label{e2.9}
o(1)&=\langle I'_{\lambda,\theta}(u_n),u_n-u\rangle=[u_n]_{s_1,p}^p-\langle u_n,u\rangle_{s_1,p}
+[u_n]_{s_2,q}^q-\langle u_n,u\rangle_{s_2,q}\nonumber \\
&\quad-\lambda\int_{\Omega}P(x)|u_n|^{k-2}u_n(u_n-u)dx-\theta\int_{\Omega}|u_n|^{p_{s_1}^*-2}u_n(u_n-u)dx \nonumber \\
&=[u_n]^p_{s_1,p}-[u]^p_{s_1,p}+[u_n]^q_{s_2,q}-[u]^q_{s_2,q}-\theta\|u_n\|^{p_{s_1}^*}_{p_{s_1}^*}+\theta\|u\|^{p_{s_1}^*}_{p_{s_1}^*} +o(1).
\end{align}
Furthermore, by using \eqref{e2.4} and the celebrated Br\'ezis and Lieb Lemma in \cite{BL}, we have
\begin{align}\begin{split}\label{e2.10}
[u_n]^p_{s_1,p}-[u_n-u]^p_{s_1,p}&=[u]^p_{s_1,p}+o(1)\\
[u_n]^q_{s_2,q}-[u_n-u]^q_{s_2,q}&=[u]^q_{s_2,q}+o(1)\\
\|u_n\|^{p_{s_1}^*}_{p_{s_1}^*}-\|u_n-u\|^{p_{s_1}^*}_{p_{s_1}^*}
&=\|u\|^{p_{s_1}^*}_{p_{s_1}^*}+o(1),
\end{split}\end{align}
as $n\to\infty$.
Therefore, combining \eqref{e2.4} and relations \eqref{e2.9}--\eqref{e2.10},
we have proved the crucial formula
\begin{equation}\label{I}
\lim_{n\to\infty}[u_n-u]_{s_1,p}^p+\lim_{n\to\infty}[u_n-u]_{s_2,q}^q=\theta\lim_{n\to\infty}\|u_n-u\|^{p_{s_1}^*}_{p_{s_1}^*}.
\end{equation}
Now, denoting
$$
\lim_{n\to\infty}\|u_n-u\|_{p_{s_1}^*}=\ell,
$$
by \eqref{a2} and \eqref{I}, we have
$$
S\ell^p\leq\theta\ell^{p_{s_1}^*}
$$
from which, assuming by contradiction that $\ell>0$, we get
\begin{equation}\label{ll1}
\ell^{p_{s_1}^*-p}\ge \frac{S}{\theta}.
\end{equation}
On the other hand, by \eqref{ps} we have
\begin{align*}\begin{split}
c+o(1)&=I_{\lambda,\theta}(u_n)-\frac{1}{p}\langle I'_{\lambda,\theta}(u_n),u_n\rangle\\
&\geq
-\lambda\left(\frac{1}{k}-\frac{1}{p}\right)\int_\Omega P(x)|u_n|^kdx
+\theta\left(\frac{1}{p}-\frac{1}{p_{s_1}^*}\right)\|u_n\|^{p_{s_1}^*}_{p_{s_1}^*}
\end{split}\end{align*}
as $n\to\infty$, from which by \eqref{e2.4}, \eqref{e2.10} and H\"older's inequality, we get
\begin{align}\begin{split}\label{ll2}
c&\geq-\lambda\left(\frac{1}{k}-\frac{1}{p}\right)\int_\Omega P(x)|u|^kdx
+\left(\frac{1}{p}-\frac{1}{p_{s_1}^*}\right)\left(\theta\ell^{p_{s_1}^*}+\theta\|u\|^{p_{s_1}^*}_{p_{s_1}^*}\right)\\
&\geq-\lambda\left(\frac{1}{k}-\frac{1}{p}\right)\|P\|_{r}\left( \int_{\Omega}  |u|^{p_{s_{1}}^{*}}dx\right)^{\frac{k}{p_{s_{1}}^{*}}}
+\left(\frac{1}{p}-\frac{1}{p_{s_1}^*}\right)\left(\theta\ell^{p_{s_1}^*}+\theta\|u\|^{p_{s_1}^*}_{p_{s_1}^*}\right)\\
&:=f\left(\int_\Omega |u|^{p_{s_1}^*}dx\right)+\left(\frac{1}{p}-\frac{1}{p_{s_1}^*}\right)\theta\ell^{p_{s_1}^*},
\end{split}
\end{align}
where
\begin{align*}\begin{split}
&f(t):=a_1t-a_2t^{\frac{k}{p_{s_1}^*}},\quad\mbox{with}\\
&a_1:=\theta\left(\frac{1}{p}-\frac{1}{p_{s_1}^*}\right)\quad\mbox{and}\quad 
a_2:=\lambda\left(\frac{1}{k}-\frac{1}{p}\right)\|P\|_{r}.
\end{split}
\end{align*}
Since
$$
\min_{t\geq0}f(t)=f\left(\left(\frac{a_2\,k}{a_1\,p_{s_1}^*}\right)^{\frac{p_{s_1}^*}{p_{s_1}^*-k}}\right)=
-\frac{p_{s_1}^*-k}{p_{s_1}^*}\left(\frac{k}{p_{s_1}^*}\right)^{\frac{k}{p_{s_1}^*-k}}
a_2^{\frac{p_{s_1}^*}{p_{s_1}^*-k}}
a_1^{-\frac{k}{p_{s_1}^*-k}},
$$
from \eqref{ll2}, using also \eqref{ll1}, we obtain
$$
c\geq
\left(\frac{1}{p}-\frac{1}{p_{s_1}^*}\right)\left[
\frac{ S^{\frac{p_{s_1}^*}{p_{s_1}^*-p}}}{\theta^{\frac{p}{p_{s_1}^*-p}}}
-\lambda^{\frac{p_{s_1}^*}{p_{s_1}^*-k}}\theta^{\frac{-k}{p_{s_1}^*-k}}\|P\|_{r}^{\frac{p_{s_1}^*}{p_{s_1}^*-k}}
\cdot\frac{p_{s_1}^*-k}{k}\cdot\left(\frac{p-k}{p_{s_1}^*-p}\right)^{\frac{p_{s_1}^*}{p_{s_1}^*-k}}\right]
$$
which contradicts the hypothesis of the lemma $c<\overline{c}$, with $\overline{c}$ given in \eqref{cappello}.

Hence, $\ell=0$ which yields from \eqref{I} that $u_n\to u$ in $E$ as $n\to\infty$, concluding the proof.
\end{proof}

\section{The multiplicity result}\label{sec3}
 Because of the presence of a critical Sobolev term in \eqref{1.1}, we first need to truncate functional $I_{\lambda,\theta}$.
By using $(P_{0})$ and \eqref{a2}, we get
\begin{equation*}
\begin{split}
I_{\lambda,\theta}(u)&=\frac{1}{p}[u]_{s_{1},p}^{p}+\frac{1}{q}[u]_{s_{2},q}^{q}-\frac{\lambda}{k}\int_{\Omega}  P(x)|u|^{k}dx-\frac{\theta}{p_{s_{1}}^{*}}\|u\|^{p_{s_{1}}^{*}}_{p_{s_1}^*},\\
& \geq \frac{1}{p}[u]_{s_{1},p}^{p}-\frac{\lambda}{k}S^{\frac{-k}{p}}\|P\|_{r}[u]_{s_{1},p}^{k}-\frac{\theta}{p_{s_{1}}^{*}}S^{\frac{-p_{s_{1}}^{*}}{p}}[u]_{s_{1},p}^{p_{s_{1}}^{*}}.
\end{split}
\end{equation*}
Let us fix $\theta>0$ and let us define $g_{\lambda,\theta}:[0,\infty)\longrightarrow \R$ such that
\begin{align}\label{defg}
\begin{split}
&g_{\lambda,\theta}(t):=\frac{t^p}{p}-\lambda b_{1}t^{k}-b_{2}t^{p_{s_{1}}^{*}},\quad\mbox{with}\\
&b_{1}:=\frac{S^{\frac{-k}{p}}\|P\|_{r}}{k}\quad\mbox{and}\quad b_{2}:=\frac{\theta S^{\frac{-p_{s_{1}}^{*}}{p}}}{p_{s_{1}}^{*}}.
\end{split}
\end{align}
Now, we will check the location of critical points of $g_{\lambda,\theta}$. 
For this set 
$$g_{\lambda,\theta}(t)=t^{k}\,\widehat{g}_{\lambda,\theta}(t),\,\, \mbox{ where }\,\,\widehat{g}_{\lambda,\theta}(t)=-\lambda b_{1}+\frac{t^{p-k}}{p}-b_{2}t^{p_{s_{1}}^{*}-k}.$$
We have $\widehat{g}\,'_{\lambda,\theta}(t)>0$ for $t>0$ small enough, so that $\widehat{g}_{\lambda,\theta}$ is increasing near $0$, and 
\begin{equation}\label{s4.18}
t^{*}:=t^{*}(\lambda)=\left[\dfrac{(p-k)}{b_{2}\,p(p_{s_{1}}^{*}-k)} \right]^{1/(p_{s_{1}}^{*}-p)}>0
\end{equation}
is such that $\widehat{g}\,'_{\lambda,\theta}(t^{*})=0$.
Furthermore, $\widehat{g}_{\lambda,\theta}(0)<0$ and $\widehat{g}_{\lambda,\theta}(t)\longrightarrow -\infty$ as $t\longrightarrow \infty$. Thus, if $\widehat{g}_{\lambda,\theta}(t^{*})>0$ then there exist $t_{0}$, $t_{1}>0$ such that 
\begin{equation}\label{s4.10}
t^{*}\in(t_{0}, t_{1})\,\, \text{ and }\,\, \widehat{g}_{\lambda,\theta}(t_{0})=\widehat{g}_{\lambda,\theta}(t_{1})=0.
\end{equation}
For this, we observe that
$$\widehat{g}_{\lambda,\theta}(t^{*})=-\lambda\, b_{1}
+\frac{1}{p}\left[\dfrac{(p-k)}{b_{2}\,p(p_{s_{1}}^{*}-k)} \right]^{(p-k)/(p_{s_{1}}^{*}-p)}
-b_{2}\left[\dfrac{(p-k)}{b_{2}\,p(p_{s_{1}}^{*}-k)} \right]^{(p_{s_{1}}^{*}-k)/(p_{s_{1}}^{*}-p)}>0$$
if we assume $\lambda<\lambda^*$, with $\lambda^*=\lambda^*(\theta)$ given as
\begin{equation}\label{tc3}
\lambda^*:=\dfrac{1}{\theta^\frac{p-k}{p_{s_{1}}^{*}-p}}\frac{(p_{s_{1}}^{*}-p)k}{(p_{s_{1}}^{*}-k)p}
\left[\frac{(p-k)p_{s_{1}}^{*}}{(p_{s_{1}}^{*}-k)p}\right]^\frac{p-k}{p_{s_{1}}^{*}-p}
\frac{S^{\frac{p_{s_{1}}^{*}-k}{p_{s_{1}}^{*}-p}}}{\|P\|_{r}}.
\end{equation}

From the analysis above, if we consider $\lambda<\lambda^*$ then
$g_{\lambda,\theta}(t_{0})=g_{\lambda,\theta}(t_{1})=0$ and 
\begin{equation}\label{s4.9}
	g_{\lambda,\theta}(t)>0,\, \mbox{ for\, any }\, t\in(t_{0},t_{1})\, 
	\text{ and }\, g_{\lambda,\theta}(t)\leq 0 \, \mbox{ for\, any }\, t\in[0,t_{0}]\cup [t_{1},\infty).
	\end{equation}
Choose a cut off function $\eta\in C^{\infty}([0,\infty))$, non-increasing such that $\eta(t)=1$ if $0\leq t\leq t_{0}$ and $\eta(t)=0$ if $t\geq t_{1}$. 
We are ready to define the truncated functional corresponding to $I_{\lambda,\theta}$ as
\begin{equation}\label{tc1}
\widehat{I}_{\lambda,\theta}(u)=\frac{1}{p}[u]_{s_{1},p}^{p}+\frac{1}{q}[u]_{s_{2},q}^{q}-\frac{\lambda}{k}\int_{\Omega}  P(x)|u|^{k}dx-\eta([u]_{s_{1},p})\frac{\theta}{p_{s_{1}}^{*}}\|u\|^{p_{s_{1}}^{*}}_{p_{s_1}^*}.\end{equation}

Also, we define $\widetilde{g}_{\lambda,\theta}:[0,\infty)\rightarrow \R$ such that 
$$\widetilde{g}_{\lambda,\theta}(t):=\frac{t^{p}}{p}-\lambda b_{1}t^{k}-b_{2}\eta(t)t^{p_{s_{1}}^{*}},$$
where $b_{1}$ and $b_{2}$ are given in \eqref{defg}.
By definition of $\eta$, we have
\begin{equation}\label{s4.0}
 \widetilde{g}_{\lambda,\theta}(t)= g_{\lambda,\theta}(t) \,\mbox{ for\, any }\,   t\in(0, t_{0}),\quad \widetilde{g}_{\lambda,\theta}(t_{0})= g_{\lambda,\theta}(t_{0})=0
\end{equation}
and also
\begin{equation}\label{s4.1}
	\widetilde{g}_{\lambda,\theta}(t)\geq g_{\lambda,\theta}(t)>0\,\mbox{ for\, any }\, t\in(t_{0}, t_{1}), \quad \widetilde{g}_{\lambda,\theta}(t)>0 \,\mbox{ for\, any }\, t\in[t_{1},\infty).
\end{equation}
Furthermore, a straight-forward computation yields that
\begin{equation}\label{s4.12}
	\widehat{I}_{\lambda,\theta}(u)\geq \widetilde{g}_{\lambda,\theta}([u]_{s_{1},p})\, \mbox{ for\, any }\, u\in E\, \text{ and }\, 
	\widehat{I}_{\lambda,\theta}(u)=I_{\lambda,\theta}(u)\, \text{  for\, any }\, [u]_{s_{1},p}\in[0,t_{0}].
\end{equation}
We are now ready to study the properties of $\widehat{I}_{\lambda,\theta}$.

\begin{lemma}\label{l4.1}
Let $\theta>0$ and $\lambda\in(0,\lambda^*)$, with $\lambda^*$ as defined in \eqref{tc3}. Then:
\begin{itemize}
\item[$(i)$]  If $\widehat{I}_{\lambda,\theta}(u)<0$ then $[u]_{s_{1},p}<t_{0}$ and $I_{\lambda,\theta}(v)=\widehat{I}_{\lambda,\theta}(v)$ for any $v\in E$ in a small enough neighborhood of $u$. 
\item[$(ii)$]  $\widehat{I}_{\lambda,\theta}$ satisfies the $(PS)_{c}$ condition for any $c<0$.
\end{itemize}
\end{lemma}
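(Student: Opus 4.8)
The plan is to prove the two items of Lemma \ref{l4.1} by exploiting the key inequalities \eqref{s4.12}, the structure of $\widetilde{g}_{\lambda,\theta}$ described in \eqref{s4.0}--\eqref{s4.1}, and the Palais--Smale condition for the untruncated functional established in Lemma \ref{l5}.

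For part $(i)$, I would argue as follows. Suppose $\widehat{I}_{\lambda,\theta}(u)<0$. By the first inequality in \eqref{s4.12}, we have $\widetilde{g}_{\lambda,\theta}([u]_{s_{1},p})\leq\widehat{I}_{\lambda,\theta}(u)<0$. Since $\lambda<\lambda^*$, the relations \eqref{s4.0}--\eqref{s4.1} tell us that $\widetilde{g}_{\lambda,\theta}(t)=g_{\lambda,\theta}(t)$ for $t\in(0,t_0)$, that $\widetilde{g}_{\lambda,\theta}(t_0)=0$, and that $\widetilde{g}_{\lambda,\theta}(t)>0$ for all $t\geq t_0$; hence the only way $\widetilde{g}_{\lambda,\theta}([u]_{s_{1},p})<0$ is if $[u]_{s_{1},p}\in[0,t_0)$, and in fact $[u]_{s_{1},p}<t_0$ strictly (recalling also $g_{\lambda,\theta}(t_0)=0$). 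Since $[\cdot]_{s_{1},p}$ is continuous on $E$, the set $\{v\in E:\,[v]_{s_{1},p}<t_0\}$ is open, so there is a neighborhood of $u$ contained in it; on this neighborhood $\eta([v]_{s_{1},p})=1$, whence $\widehat{I}_{\lambda,\theta}(v)=I_{\lambda,\theta}(v)$ by the second statement in \eqref{s4.12} (equivalently, by \eqref{tc1}). This also shows $\widehat{I}_{\lambda,\theta}$ and $I_{\lambda,\theta}$ share their critical points at negative levels.

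For part $(ii)$, let $\{u_n\}_n\subset E$ be a Palais--Smale sequence for $\widehat{I}_{\lambda,\theta}$ at a level $c<0$, so $\widehat{I}_{\lambda,\theta}(u_n)\to c$ and $\widehat{I}'_{\lambda,\theta}(u_n)\to0$ in $E^*$. First I would show $\{u_n\}_n$ is bounded in $E$: since $\widehat{I}_{\lambda,\theta}(u_n)\to c<0$, for $n$ large we have $\widehat{I}_{\lambda,\theta}(u_n)<0$, and the argument of part $(i)$ gives $[u_n]_{s_{1},p}<t_0$ for all large $n$; thus $\eta([u_n]_{s_{1},p})=1$ eventually, so $\widehat{I}_{\lambda,\theta}(u_n)=I_{\lambda,\theta}(u_n)$ and $\widehat{I}'_{\lambda,\theta}(u_n)=I'_{\lambda,\theta}(u_n)$ for all large $n$ — here one should check that the derivative of the truncation term $\eta([u]_{s_{1},p})\|u\|_{p_{s_1}^*}^{p_{s_1}^*}$ vanishes on the open set $\{[v]_{s_{1},p}<t_0\}$ because $\eta\equiv1$ there, so no $\eta'$ term appears. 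Consequently $\{u_n\}_n$ is a $(PS)_c$ sequence for the original functional $I_{\lambda,\theta}$ at level $c<0\leq\overline{c}$; boundedness then follows either directly from the boundedness $[u_n]_{s_{1},p}<t_0$ together with the corresponding control on $[u_n]_{s_{2},q}$ coming from $I_{\lambda,\theta}(u_n)$ bounded (as in the Case analysis of Lemma \ref{l5}), or simply by invoking Lemma \ref{l5}. Finally, since $c<0$ and $\overline{c}=\overline{c}(\lambda,\theta)$ may be negative, I must verify that $c<\overline{c}$: but here the point is that once we know $[u_n]_{s_{1},p}<t_0$ eventually, the compactness does not actually require $c<\overline{c}$ — rather, one re-runs the argument of Lemma \ref{l5} using the extra a priori bound $[u_n]_{s_{1},p}\leq t_0$, which by \eqref{s4.18} and the definition of $t_0$ forces $S\ell^{p-p_{s_1}^*}> \theta$ to fail, i.e. $\ell=0$. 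Concretely: the Brézis--Lieb splitting \eqref{I} still holds, giving $S\ell^p\leq\theta\ell^{p_{s_1}^*}$, while passing to the limit in $[u_n]_{s_1,p}\leq t_0$ and using $[u_n-u]_{s_1,p}^p=[u_n]_{s_1,p}^p-[u]_{s_1,p}^p+o(1)\leq t_0^p$ with the defining property of $t_0$ (namely $b_2\,t_0^{p_{s_1}^*-p}<t_0^{p-k}/(p\,t_0^{k-?})$, i.e. $t_0<t^*$ or the analogous strict inequality from $\widehat{g}_{\lambda,\theta}(t_0)=0$) rules out $\ell^{p_{s_1}^*-p}\geq S/\theta$; hence $\ell=0$ and $u_n\to u$ in $E$.

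The main obstacle I anticipate is precisely this last point in $(ii)$: reconciling the threshold $\overline{c}$ of Lemma \ref{l5} with the fact that $c<0$ need not be below $\overline{c}$ when $\overline{c}<0$. The resolution is that the truncation forces every $(PS)_c$ sequence with $c<0$ to live (eventually) in the region $[u]_{s_{1},p}<t_0$, and on that region the bad term in \eqref{I} cannot produce a nonzero concentration $\ell$; so one should either quote Lemma \ref{l5} after checking its hypotheses are met in disguise, or, more cleanly, repeat its endgame with the a priori smallness $[u_n]_{s_{1},p}<t_0$ in hand. Verifying that this smallness indeed contradicts $\ell^{p_{s_1}^*-p}\geq S/\theta$ — using the explicit value \eqref{s4.18} of $t^*$ and the fact that $t_0<t^*$ lies in the region where $g_{\lambda,\theta}'<0$ is not yet forced, but where the concentration inequality is incompatible with the energy level being negative — is the one genuinely delicate computation, and I would carry it out carefully rather than sketch it.
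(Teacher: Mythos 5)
Part $(i)$ of your proposal is essentially the paper's argument, phrased in the contrapositive rather than by contradiction; that is fine.

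Part $(ii)$ has a genuine gap. You correctly identify the potential difficulty — to invoke Lemma \ref{l5} one needs $c<\overline{c}$, and $c<0$ alone is not a priori enough — but you then stop short: you speculate that $\overline{c}$ ``may be negative'' and pivot to an alternative argument that you explicitly decline to complete (``I would carry it out carefully rather than sketch it''), and the one inequality you do write down, ``$b_2\,t_0^{p_{s_1}^*-p}<t_0^{p-k}/(p\,t_0^{k-?})$,'' is garbled. The paper resolves the issue head-on: it shows that the standing hypothesis $\lambda<\lambda^*$, with $\lambda^*$ as in \eqref{tc3}, already forces $\overline{c}>0$. Concretely, $\overline{c}>0$ is equivalent to $\lambda<\lambda^{**}$ for the explicit
$$
\lambda^{**}:=\frac{p_{s_1}^*-p}{p-k}\left(\frac{k}{p_{s_1}^*-k}\right)^{\frac{p_{s_1}^*-k}{p_{s_1}^*}}\frac{S^{\frac{p_{s_1}^*-k}{p_{s_1}^*-p}}}{\|P\|_r\,\theta^{\frac{p-k}{p_{s_1}^*-p}}},
$$
and a short algebraic manipulation shows $\lambda^*<\lambda^{**}$ is equivalent to the trivially true inequality
$$
\left(\frac{p-k}{p}\right)^{\frac{p_{s_1}^*-k}{p_{s_1}^*}}\left(\frac{k}{p_{s_1}^*}\right)^{\frac{k}{p_{s_1}^*}}\left[\frac{(p-k)p_{s_1}^*}{(p_{s_1}^*-k)p}\right]^{\frac{p-k}{p_{s_1}^*-p}+\frac{k}{p_{s_1}^*}}<1,
$$
which holds since $k<p<p_{s_1}^*$. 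Hence $c<0<\overline{c}$ and Lemma \ref{l5} applies directly. This comparison of $\lambda^*$ and $\lambda^{**}$ is the missing idea in your proposal.

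For the record, the alternative route you sketch does work and would make a clean substitute, but you must actually carry it out. From $[u_n]_{s_1,p}<t_0$ eventually and Br\'ezis--Lieb, $\limsup_n[u_n-u]_{s_1,p}^p\leq t_0^p$; by \eqref{a2} this gives $S\ell^p\leq t_0^p<(t^*)^p$. Using \eqref{s4.18} with $b_2=\theta S^{-p_{s_1}^*/p}/p_{s_1}^*$ one computes
$$
(t^*)^p=\left[\frac{(p-k)p_{s_1}^*}{p(p_{s_1}^*-k)}\right]^{\frac{p}{p_{s_1}^*-p}}\frac{S^{\frac{p_{s_1}^*}{p_{s_1}^*-p}}}{\theta^{\frac{p}{p_{s_1}^*-p}}},
$$
and since $p<p_{s_1}^*$ the bracketed ratio is strictly less than $1$, so $\ell^p<(S/\theta)^{p/(p_{s_1}^*-p)}$, i.e.\ $\ell^{p_{s_1}^*-p}<S/\theta$, contradicting \eqref{ll1}. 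As written, however, your proof of $(ii)$ is incomplete.
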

\begin{proof}
Fix $\theta>0$ and $\lambda\in(0,\lambda^*)$.

\vspace{0.1cm}
\begin{itemize}
\item {$(i)$} Suppose that $\widehat{I}_{\lambda,\theta}(u)<0$.  By contradiction, we assume that $[u]_{s_{1},p}\geq t_{0}$. By \eqref{s4.1} and \eqref{s4.12}, we have
$\widehat{I}_{\lambda,\theta}(u)\geq \widetilde{g}_{\lambda,\theta}([u]_{s_{1},p})\geq 0$ which contradicts our assumption that $\widehat{I}_{\lambda,\theta}(u)<0$. Hence, $[u]_{s_{1},p}<t_{0}$. Thus, by \eqref{s4.12}, $I_{\lambda,\theta}(v)=\widehat{I}_{\lambda,\theta}(v)$ for all $v\in E$ in a small enough neighborhood of $u$.

\vspace{0.1cm}
\item {$(ii)$} Let $\{u_{n}\}_n$ be a $(PS)_{c}$ sequence for the functional $\widehat{I}_{\lambda,\theta}$ with $c<0$, that is satisfying
$$
\widehat{I}_{\lambda,\theta}(u_n)\to c\,\mbox{ and }\,\widehat{I}\,'_{\lambda,\theta}(u_n)\to 0\,\mbox{ in }\,E^*\,\mbox{ as }\,n\to\infty.
$$
Therefore, there exists $n_{0}\in\mathbb N$ large enough such that $\widehat{I}_{\lambda,\theta}(u_{n})<0$ for any $n>n_{0}$, consequently, $[u_{n}]_{s_{1},p}<t_{0}$. Using \eqref{s4.12}, we have $\widehat{I}_{\lambda,\theta}(u_{n})=I_{\lambda,\theta}(u_{n})$ and $\widehat{I}\,'_{\lambda,\theta}(u_{n})=I'_{\lambda,\theta}(u_{n})$, which imply that $\{u_{n}\}_n$ is a $(PS)_{c}$ sequence for the functional $I_{\lambda,\theta}$ with $c<0$.
Now, we observe that $\overline{c}$ given in \eqref{cappello} verifies $\overline{c}>0$ if $\lambda<\lambda^{**}$, with $\lambda^{**}=\lambda^{**}(\theta)$ given as
$$
\lambda^{**}:=\frac{p_{s_1}^*-p}{p-k}\left(\frac{k}{p_{s_1}^*-k}\right)^{\frac{p_{s_1}^*-k}{p_{s_1}^*}}
\frac{ S^{\frac{p_{s_1}^*-k}{p_{s_1}^*-p}}}{\|P\|_{r}\theta^\frac{p-k}{p_{s_{1}}^{*}-p}}.
$$
If we compare $\lambda^{**}$ with $\lambda^*$ given in \eqref{tc3}, we observe that
$$
\begin{aligned}
\lambda^*=&\dfrac{1}{\theta^\frac{p-k}{p_{s_{1}}^{*}-p}}\frac{(p_{s_{1}}^{*}-p)k}{(p_{s_{1}}^{*}-k)p}
\left[\frac{(p-k)p_{s_{1}}^{*}}{(p_{s_{1}}^{*}-k)p}\right]^\frac{p-k}{p_{s_{1}}^{*}-p}
\frac{S^{\frac{p_{s_{1}}^{*}-k}{p_{s_{1}}^{*}-p}}}{\|P\|_{r}} \\
&<\frac{p_{s_1}^*-p}{p-k}\left(\frac{k}{p_{s_1}^*-k}\right)^{\frac{p_{s_1}^*-k}{p_{s_1}^*}}
	\frac{ S^{\frac{p_{s_1}^*-k}{p_{s_1}^*-p}}}{\|P\|_{r}\theta^\frac{p-k}{p_{s_{1}}^{*}-p}}=\lambda^{**}
\end{aligned}
$$
if and only if
$$
\frac{p-k}{p}\left(\frac{k}{p_{s_1}^*-k}\right)^{\frac{k}{p_{s_1}^*}}\left[\frac{(p-k)p_{s_{1}}^{*}}{(p_{s_{1}}^{*}-k)p}\right]^\frac{p-k}{p_{s_{1}}^{*}-p}<1,
$$
which can be rewritten as
$$
\left(\frac{p-k}{p}\right)^{1-\frac{k}{p_{s_1}^*}}
\left[\frac{p-k}{p}\cdot\frac{k}{p_{s_1}^*-k}\cdot\frac{(p_{s_{1}}^{*}-k)p}{(p-k)p_{s_{1}}^{*}}\right]^{\frac{k}{p_{s_1}^*}}
\left[\frac{(p-k)p_{s_{1}}^{*}}{(p_{s_{1}}^{*}-k)p}\right]^{\frac{p-k}{p_{s_{1}}^{*}-p}+\frac{k}{p_{s_{1}}^*}}<1,
$$
that is
$$
\left(\frac{p-k}{p}\right)^{\frac{p_{s_1}^*-k}{p_{s_1}^*}}
\left(\frac{k}{p_{s_1}^*}\right)^{\frac{k}{p_{s_1}^*}}
\left[\frac{(p-k)p_{s_{1}}^{*}}{(p_{s_{1}}^{*}-k)p}\right]^{\frac{p-k}{p_{s_{1}}^{*}-p}+\frac{k}{p_{s_{1}}^*}}<1,
$$
which is trivially true, since $k<p<p_{s_1}^*$.

Hence, by Lemma \eqref{l5},  $I_{\lambda,\theta}$ satisfies the $(PS)_{c}$ condition for $c<0$, since $\lambda<\lambda^*<\lambda^{**}$ yields that $\overline{c}>0$.
\end{itemize}
\end{proof}

In order to prove the existence of multiple weak solutions for \eqref{1.1}, we need the genus theory. For this, we first recall the definition of genus.

\begin{definition}
	Let $X$ be a generic Banach space. Define,
	$$\Sigma=\{A\subset X\setminus\{0\}:\,\, A \text{ \,is\, closed\, and\, symmetric} \}.$$
	The genus  $\gamma(A)$ of $A\in\Sigma$	is defined as the smallest positive integer $d$ such that there exists an odd continuous map from $A$ to $\R^{d}\backslash\{0\}$.  If such $d$ does not exist, then we set $\gamma(A)=\infty$. Also, we define $\gamma(\emptyset)=0$
\end{definition}

The following proposition states the main properties on Krasnoselskii's genus which we need later.

\begin{prop}\cite[Lemma 1.2]{Ambrosetti}\label{s4p1}
	Let $A$, $B\in\Sigma$. Then the followings hold:
	\begin{enumerate}
		\item If there exists an odd continuous map from $A$ to $B$, then  $\gamma(A)\leq\gamma(B)$.
		\item If there exists $f\in C(A,X)$ odd, then $\gamma(A)=\gamma(f(A))$.
		\item If there exists an odd homeomorphism between $A$ and $B$, then $\gamma(A)=\gamma(B)$.
		\item If $S^{n-1}$ is the unit sphere in $\R^{n}$ then $\gamma(S^{n-1})=n$.
		\item $\gamma(A\cup B)\leq\gamma(A)+\gamma(B)$.
		\item If $\gamma(B)<\infty$, then $\gamma(\overline{A\backslash B})\geq \gamma(A)-\gamma(B)$.
		\item If $A$ is a  compact set, then $\gamma(A)<\infty$  and  there exists $\varepsilon>0$ such that $\gamma(A)=\gamma(N_{\varepsilon}(A))$, where $N_{\varepsilon}(A)=\{x\in X:d(x,X)\leq \varepsilon\}$.
		\item If $Y$ is a subspace of $X$ with codimension $k$ and $\gamma(A)>k$ then $A\cap Y\neq \emptyset$.
	\end{enumerate}
\end{prop}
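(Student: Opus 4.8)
The plan is to establish the eight properties in an order that lets the later ones rest on the earlier ones; the only genuinely nonelementary inputs are the Tietze extension theorem, used together with an antipodal symmetrization trick, and the Borsuk--Ulam theorem. First, properties $(1)$--$(3)$: if $\eta\colon A\to B$ is odd and continuous and $\gamma(B)=d$ with odd continuous $\varphi\colon B\to\R^{d}\setminus\{0\}$, then $\varphi\circ\eta\colon A\to\R^{d}\setminus\{0\}$ is odd and continuous, so $\gamma(A)\le d=\gamma(B)$, which is $(1)$; monotonicity $A\subset B\Rightarrow\gamma(A)\le\gamma(B)$ is the case $\eta=\mathrm{id}$. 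Applying $(1)$ to $f\colon A\to f(A)$ gives $\gamma(f(A))\le\gamma(A)$, and when $f$ restricts to a homeomorphism onto $f(A)$ (the setting in which $(2)$ and $(3)$ are applied) its inverse is again odd and continuous, giving the reverse inequality and hence the equalities.

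For $(4)$, the inclusion $S^{n-1}\hookrightarrow\R^{n}\setminus\{0\}$ is odd and continuous, so $\gamma(S^{n-1})\le n$; for the lower bound, $\gamma(S^{n-1})=d\le n-1$ would produce an odd continuous map $S^{n-1}\to\R^{d}\setminus\{0\}\subset\R^{n-1}\setminus\{0\}$, contradicting Borsuk--Ulam, so $\gamma(S^{n-1})=n$. For $(5)$, write $\gamma(A)=m$, $\gamma(B)=n$ with odd continuous $\varphi_{A}\colon A\to\R^{m}\setminus\{0\}$ and $\varphi_{B}\colon B\to\R^{n}\setminus\{0\}$; extend each to a continuous map on $X$ by Tietze and restore oddness by replacing the extension $\widetilde\varphi$ with $x\mapsto\tfrac{1}{2}(\widetilde\varphi(x)-\widetilde\varphi(-x))$, which is continuous, odd, and equals $\varphi_{A}$ on $A$ (resp.\ $\varphi_{B}$ on $B$), hence still nonvanishing there. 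Then $x\mapsto(\widetilde\varphi_{A}(x),\widetilde\varphi_{B}(x))$ is odd, continuous, and nonzero on $A\cup B$ (the first block on $A$, the second on $B$), so $\gamma(A\cup B)\le m+n$. Property $(6)$ follows by combining $(5)$ with monotonicity: since $B$ is closed, $A\subset B\cup\overline{A\setminus B}$, hence $\gamma(A)\le\gamma(B)+\gamma(\overline{A\setminus B})$, and when $\gamma(B)<\infty$ rearranging gives $(6)$. Property $(8)$: if $Y$ is a closed subspace of codimension $k$, choose a continuous linear surjection $L\colon X\to\R^{k}$ with kernel $Y$; were $A\cap Y=\emptyset$, then $L|_{A}\colon A\to\R^{k}\setminus\{0\}$ would be odd and continuous, forcing $\gamma(A)\le k$, contrary to $\gamma(A)>k$.

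It remains to handle $(7)$. Since $A$ is compact, symmetric, and $0\notin A$, for each $x\in A$ pick an open set $U_{x}\ni x$ with $U_{x}\cap(-U_{x})=\emptyset$ (possible because $x\ne -x$), extract a finite subcover $U_{x_{1}},\dots,U_{x_{m}}$ of $A$, take a partition of unity $\{\psi_{i}\}$ on $A$ subordinate to it, and set $\widetilde\psi_{i}(x)=\psi_{i}(x)-\psi_{i}(-x)$. The map $x\mapsto(\widetilde\psi_{1}(x),\dots,\widetilde\psi_{m}(x))$ is odd and continuous, and nonvanishing on $A$: if it vanished at $x$ then $\psi_{j}(x)=\psi_{j}(-x)$ for all $j$, but $\sum_{i}\psi_{i}(x)=1$ forces some $\psi_{j}(x)>0$, which would put both $x$ and $-x$ in $U_{x_{j}}$. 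Hence $\gamma(A)\le m<\infty$. For the neighborhood statement, with $\gamma(A)=d$ extend a nonvanishing odd map from $A$ to an odd continuous $\widetilde\varphi\colon X\to\R^{d}$ as above; the open set $\{\widetilde\varphi\ne 0\}$ contains the compact set $A$, hence contains $N_{\varepsilon}(A)$ for all small $\varepsilon>0$ (shrink $\varepsilon$ further than the distance from $0$ to $A$ so that $0\notin N_{\varepsilon}(A)$), giving $\gamma(N_{\varepsilon}(A))\le d$; together with the monotonicity bound $\gamma(A)\le\gamma(N_{\varepsilon}(A))$ this yields equality.

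The main obstacle is not a single deep step but a recurring technical device: whenever one has a nonvanishing odd map defined only on a closed symmetric set, it must be promoted to a globally defined odd continuous map via Tietze extension followed by antipodal symmetrization, and one must check that this destroys neither continuity nor nonvanishing on the original set --- this is what makes $(5)$ and $(7)$ work. The one irreducibly topological ingredient is the lower bound $\gamma(S^{n-1})\ge n$ in $(4)$, which cannot be obtained without Borsuk--Ulam; and the compactness plus partition-of-unity argument in $(7)$ is the fussiest to write out in full detail.
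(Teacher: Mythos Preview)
The paper does not give its own proof of this proposition: it is simply quoted from \cite[Lemma 1.2]{Ambrosetti} and used as a black box. So there is no in-paper argument to compare against; what you have written is the standard textbook proof (Tietze extension plus antipodal symmetrization for $(5)$ and $(7)$, Borsuk--Ulam for the lower bound in $(4)$, partition of unity for compactness in $(7)$, a linear projection with kernel $Y$ for $(8)$), and it is correct in all essentials.

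One small slip worth flagging: in your treatment of $(2)$ you write that applying $(1)$ to $f\colon A\to f(A)$ gives $\gamma(f(A))\le\gamma(A)$, but $(1)$ actually yields the opposite direction, $\gamma(A)\le\gamma(f(A))$ (an odd continuous map \emph{from} $A$ \emph{to} $B$ bounds $\gamma(A)$ above by $\gamma(B)$). Your subsequent appeal to $f^{-1}$ then supplies $\gamma(f(A))\le\gamma(A)$, so the two-sided argument still closes once $f$ is a homeomorphism onto its image --- which is exactly $(3)$. As stated in the paper, however, $(2)$ asserts equality for an arbitrary odd $f\in C(A,X)$, and that does not follow without further hypotheses; the standard formulation is the one-sided inequality $\gamma(A)\le\gamma(\overline{f(A)})$. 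This is a wrinkle in the paper's statement rather than in your argument, and since the paper only ever uses $(2)$--$(3)$ in the homeomorphism situation (identifying a sphere in a finite-dimensional subspace with $S^{j-1}$), your restriction to that case is harmless for the applications.
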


Now, we are going to construct an appropriate minimax sequence of negative critical values for the truncated functional $\widehat{I}_{\lambda,\theta}$.
Considering $X=E$ as Banach space and using notation above, for any $j\in\mathbb N$ we define the minimax values $c_j=c_j(\lambda,\theta)$ as
\begin{equation}\label{cj}
	c_j:=\inf_{A\in \Sigma_j} \sup_{u\in A} \widehat{I}_{\lambda,\theta}(u),\quad\mbox{where }\,
	\Sigma_j:=\left\{A\in \Sigma:\,\,\gamma(A)\geq j\right\}.
\end{equation}
Clearly, $c_j \leq c_{j+1}$ for any $j\in\mathbb N$.

\begin{lemma}\label{l4.2}
Let $\theta>0$. There exists a sequence $\{\lambda_j\}_j\subset\mathbb R^+$, with $\lambda_j<\lambda_{j+1}$, such that for any $j\in\mathbb N$ we have
$$
-\infty<c_j<0
$$
for any $\lambda>\lambda_j$.
\end{lemma}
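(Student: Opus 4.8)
The plan is to establish the two inequalities separately. For the lower bound $c_j>-\infty$, I would use the truncation estimate \eqref{s4.12}, namely $\widehat{I}_{\lambda,\theta}(u)\geq\widetilde{g}_{\lambda,\theta}([u]_{s_1,p})$ for all $u\in E$, together with the fact that $\widetilde{g}_{\lambda,\theta}$ is bounded below on $[0,\infty)$: indeed $\widetilde g_{\lambda,\theta}(t)=t^p/p-\lambda b_1 t^k-b_2\eta(t)t^{p_{s_1}^*}$ coincides with $g_{\lambda,\theta}$ on $[0,t_1]$ (a continuous function on a compact interval, hence bounded), and equals $t^p/p-\lambda b_1 t^k$ for $t\geq t_1$, which tends to $+\infty$ since $p>k$. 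So $\inf_{t\geq0}\widetilde g_{\lambda,\theta}(t)=:M_{\lambda,\theta}>-\infty$, and consequently $\widehat{I}_{\lambda,\theta}(u)\geq M_{\lambda,\theta}$ for every $u\in E$; taking infimum over $A$ and supremum over $u\in A$ in \eqref{cj} gives $c_j\geq M_{\lambda,\theta}>-\infty$ for every $j$, uniformly in $j$.

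For the upper bound $c_j<0$, the strategy is to exhibit, for each $j$, a set $A_j\in\Sigma_j$ (i.e. closed, symmetric, with $\gamma(A_j)\geq j$) on which $\sup_{u\in A_j}\widehat{I}_{\lambda,\theta}(u)<0$, provided $\lambda$ is large enough. The natural choice is to pick a $j$-dimensional subspace $F_j\subset E$ — for instance $F_j=\mathrm{span}\{\varphi_1,\dots,\varphi_j\}$ with $\varphi_i\in C_0^\infty(\Omega)$ linearly independent and with pairwise disjoint supports, which guarantees $\int_\Omega P(x)|u|^k\,dx>0$ for every $u\in F_j\setminus\{0\}$ since $P$ is nontrivial and nonnegative (one should choose the $\varphi_i$ supported where $P$ does not vanish identically) — and then set $A_j=\{u\in F_j:\ [u]_{s_1,p}=\rho\}$ for a small radius $\rho\in(0,t_0)$ to be chosen. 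By Proposition \ref{s4p1}(4) (or (3)), $\gamma(A_j)=j$, so $A_j\in\Sigma_j$. On $F_j$, since all norms are equivalent (finite dimension), there are constants $0<m_j\leq M_j$ with $m_j[u]_{s_1,p}^k\leq\int_\Omega P(x)|u|^k\,dx$ for $[u]_{s_1,p}=\rho$, and $[u]_{s_2,q}^q\leq M_j[u]_{s_1,p}^q$; moreover for $[u]_{s_1,p}=\rho<t_0$ we have $\eta([u]_{s_1,p})=1$, so $\widehat{I}_{\lambda,\theta}(u)=I_{\lambda,\theta}(u)$. Thus on $A_j$,
\begin{equation*}
\widehat{I}_{\lambda,\theta}(u)\leq\frac{\rho^p}{p}+\frac{M_j}{q}\rho^q-\frac{\lambda m_j}{k}\rho^k.
\end{equation*}
Since $1<q<k<p$, for $\rho$ small enough the term $-\tfrac{\lambda m_j}{k}\rho^k$ dominates both $\tfrac{\rho^p}{p}$ and $\tfrac{M_j}{q}\rho^q$ once $\lambda$ is large; more precisely, first fix $\rho=\rho_j\in(0,t_0)$ so small that $\rho_j^{p-q}/p$ and (after absorbing) the $q$-term are controlled, then choose $\lambda_j$ large enough that the bracketed quantity is negative. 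Being careful: I would fix $\rho_j$ depending only on $j$ (and on $t_0$, $M_j$), then the right-hand side is of the form $C_j-\lambda D_j$ with $D_j>0$, which is negative for $\lambda>\lambda_j:=C_j/D_j$. Enlarging $\lambda_j$ if necessary so that the sequence $\{\lambda_j\}_j$ is strictly increasing (replace $\lambda_j$ by $\max\{\lambda_j,\lambda_{j-1}+1\}$) completes the construction. Hence for $\lambda>\lambda_j$ we get $c_j\leq\sup_{u\in A_j}\widehat{I}_{\lambda,\theta}(u)<0$.

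The main obstacle — or rather the only point requiring genuine care — is the dependence of the radius $\rho_j$ and threshold $\lambda_j$ on $j$: because the $q$-seminorm cannot be dropped (the space $E$ has two genuinely different seminorms when $\Omega$ is not bounded), one must handle the extra term $\tfrac{1}{q}[u]_{s_2,q}^q$ on the finite-dimensional set $A_j$ via norm equivalence, and the equivalence constant $M_j$ degenerates as $j\to\infty$, forcing $\rho_j\to0$ and $\lambda_j\to\infty$; this is consistent with the discussion after Theorem \ref{t1} and is why one only obtains finitely many solutions rather than infinitely many. One should also double-check that $A_j\subset E\setminus\{0\}$, that it is closed in $E$ (it is, being the preimage of $\{\rho\}$ under the continuous map $[\cdot]_{s_1,p}$ intersected with the closed subspace $F_j$), and symmetric (clear), so that it indeed belongs to $\Sigma$ and the genus computation in Proposition \ref{s4p1}(4) applies after an odd homeomorphism onto the unit sphere $S^{j-1}$ of $F_j$.
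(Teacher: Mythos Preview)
Your approach is essentially the paper's: both arguments pick a $j$-dimensional subspace supported where $P>0$, exploit equivalence of norms there, scale to a sphere of suitable radius, and show the truncated energy is negative on that sphere once $\lambda$ is large. The lower bound $c_j>-\infty$ is handled the same way.

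Two points deserve cleaning up. First, the constraint $\rho_j\in(0,t_0)$ is both unnecessary and circular: $t_0$ is a root of $g_{\lambda,\theta}$ and hence depends on $\lambda$, so you cannot fix $\rho_j$ in terms of $t_0$ and \emph{then} choose $\lambda$. The fix is immediate: since $0\le\eta\le1$ and the critical term has a good sign, the bound
\[
\widehat I_{\lambda,\theta}(u)\le\frac1p[u]_{s_1,p}^p+\frac1q[u]_{s_2,q}^q-\frac{\lambda}{k}\int_\Omega P|u|^k\,dx
\]
holds for every $u\in E$ irrespective of the size of $[u]_{s_1,p}$, and that is all you actually use. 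This is precisely what the paper does.

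Second, your heuristic ``for $\rho$ small enough the term $-\lambda m_j\rho^k/k$ dominates $M_j\rho^q/q$'' is backwards: since $q<k$, the ratio $\rho^k/\rho^q\to 0$ as $\rho\to 0$, so shrinking $\rho$ works \emph{against} you. Fortunately your actual argument does not need this: fixing any $\rho_j>0$ whatsoever and then taking $\lambda>\lambda_j:=\dfrac{k}{m_j\rho_j^k}\Bigl(\dfrac{\rho_j^p}{p}+\dfrac{M_j\rho_j^q}{q}\Bigr)$ already gives $\sup_{A_j}\widehat I_{\lambda,\theta}<0$. The paper instead optimizes over the radius --- it picks $t=t_j^*$ minimizing an explicit upper-bounding function $h_\lambda(t)=\dfrac{\delta_j^q}{q}-\dfrac{\lambda\delta_j^{-k}}{k}t^{k-q}+\dfrac{1}{p}t^{p-q}$ --- which produces a closed formula for $\lambda_j$ independent of $\theta$; this sharpness matters downstream when one compares $\lambda_j$ to $\lambda^*(\theta)$ to define the sequence $\theta_j$.
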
 

\begin{proof}
Let us fix $\theta>0$.
Since $\widehat{I}_{\lambda,\theta}$ is trivially bounded from below, we get that $c_j>-\infty$ for any $j\in\mathbb N$.

Now, let us fix $j\in\mathbb N$. Since $P$ is nontrivial and nonnegative in $\Omega$, we can find an open set $\Omega_P\subseteq\Omega$ where $P(\cdot)>0$ on $\Omega_P$. Let us consider $\omega\subset \R^{N}$ a bounded open set having continuous boundary, with $|\omega|>0$, eventually with $\omega\subset\Omega_P$, that is such that  $P(\cdot)>0$ on $\omega$. Then, denoting with $\widetilde{u}$ the trivial extension of a generic function $u$ by zero, see  \cite{G}, and setting
$$\widetilde{Z}^{s_{1},p}(\omega)=\{u\in {Z}^{s_{1},p}(\omega):\,\, \widetilde{u}\in {Z}^{s_{1},p}(\R^{N}) \},$$
we get $\widetilde{Z}^{s_{1},p}(\omega)\subset E$ is a reflexive and separable space. Also, the map
$$
u\in\widetilde{Z}^{s_{1},p}(\omega)\,\,\mapsto\,\,\left( \int_\omega P(x)|u|^kdx\right)^{1/k},
$$
represents a norm.

Let $E_j$ be a $j$-dimensional subspace of $\widetilde{Z}^{s_{1},p}(\omega)$. Since all norms on $E_{j}$ are topologically equivalent, there exists $\delta_j>1$ such that
\begin{equation}\label{s4.3}
\delta_j^{-1}[u]_{s_2,q}\leq[u]_{s_1,p}\leq\delta_j\left( \int_\omega P(x)|u|^kdx\right)^{1/k},\quad\mbox{for\, any }\,u\in E_j.
\end{equation}
Without loss of generality, we can assume that $\delta_j<\delta_{j+1}$ for any $j\in\mathbb N$.

For any $t>0$ and any $u\in E_{j}$ with $[u]_{s_{1},p}=1$, by \eqref{s4.3} we get
\begin{equation}\label{s4.8}
\begin{aligned}
\widehat{I}_{\lambda,\theta}(tu)&=\frac{t^{p}}{p}+\frac{t^{q}}{q}[u]_{s_{2},q}^{q}-\frac{\lambda t^{k}}{k}\int_{\omega}  P(x)|u|^{k}dx-\eta\left(1\right)\frac{t^{p_{s_{1}}^{*}}}{p_{s_{1}}^{*}}\theta\int_{\omega}|u|^{p_{s_{1}}^{*}} dx\\
&\leq\frac{t^{p}}{p}+\frac{t^{q}}{q}\delta_j^q-\frac{\lambda t^{k}}{k}\delta_j^{-k}=t^{q}h_\lambda(t),
\end{aligned}
	\end{equation}
where
$$h_\lambda(t):=\frac{\delta_j^q}{q}-\lambda\frac{\delta_j^{-k}}{k}t^{k-q}+\frac{1}{p}t^{p-q}.$$
We observe that $h_\lambda(0)>0$ and $h_\lambda(t)\to\infty$ for $t\to \infty$. Also, since $p>k>q$ from
$$h'_\lambda(t)=-\lambda \frac{\delta_j^{-k}(k-q)}{k}t^{k-q-1}+\frac{p-q}{p}t^{p-q-1}=
t^{k-q-1}\left(-\lambda \frac{\delta_j^{-k}(k-q)}{k}+\frac{p-q}{p}t^{p-k}\right)
$$
we get $h'_\lambda(t)<0$ for $t\to0^+$ and $h'_\lambda(t)$ is increasing for $t>0$. Thus, we deduce there exists a unique $t^*_j>0$ such that 
$h'_\lambda(t^*_j)=0$ and $h_\lambda(t)\geq h_\lambda(t^*_j)$ for any $t\in(0,t^*_j)$, where $t^*_j=t^*_j(\lambda)$ is
$$t^*_j:=\left(\lambda\delta_j^{-k}\frac{p(k-q)}{k(p-q)} \right)^{\frac{1}{p-k}}>0.$$
Also, we observe that
\begin{align*}
h_\lambda(t^*_j)=\frac{\delta_j^q}{q}-\left(\lambda\delta_j^{-k}\right)^{\frac{p-q}{p-k}}\cdot\frac{p-k}{k(p-q)}\cdot\left[\frac{p(k-q)}{k(p-q)}\right]^{\frac{k-q}{p-k}},
\end{align*}
which implies that $h_\lambda(t^*_j)<0$ if and only if $\lambda>\lambda_j$, where
\begin{equation}\label{s4.6}
\lambda_{j}:=\delta_j^{\frac{q(p-k)}{p-q}+k}\cdot\frac{k(p-q)}{(p-k)^{\frac{p-k}{p-q}}(k-q)^{\frac{k-q}{p-q}}}\cdot q^{-\frac{p-k}{p-q}}\cdot p^{\frac{q-k}{p-q}}.
\end{equation}
Clearly, $\lambda_j<\lambda_{j+1}$ considering the monotonicity of $\{\delta_j\}_j$ given in \eqref{s4.3}.

Thus, for any $\lambda>\lambda_j$ and any $u\in E_{j}$ with $[u]_{s_{1},p}=1$, by \eqref{s4.8} we have
\begin{equation}\label{affermazione}
\widehat{I}_{\lambda,\theta}(t^*_ju)\leq (t^*_j)^qh_\lambda(t^*_j)=:-\varepsilon_j<0.
\end{equation} 
Let us set
$$
\mathbb{S}_j:=\left\{u\in E_j:\,\,[u]_{s_{1},p}=t^*_j\right\}.
$$
It is clear that $\mathbb{S}_j$ is homeomorphic to the $(j-1)$-dimensional sphere $S^{j-1}$. Hence, from Proposition \ref{s4p1} we know that $\gamma(\mathbb{S}_j)=j$. While, by \eqref{affermazione} we have that $\mathbb{S}_j\subset \widehat{I}_{\lambda,\theta}^{\,-\varepsilon_j}$, with
$$
\widehat{I}_{\lambda,\theta}^{\,-\varepsilon_j}:=\left\{u\in E:\,\,\widehat{I}_{\lambda,\theta}(u)\leq{-\varepsilon_j}\right\},
$$
and so by Proposition \ref{s4p1} again, we have
$$
\gamma(\widehat{I}_{\lambda,\theta}^{\,-\varepsilon_j})\geq\gamma(\mathbb{S}_j)=j.
$$
Since $\widehat{I}_{\lambda,\theta}(0)=0$ we have that $0\not\in \widehat{I}_{\lambda,\theta}^{\,-\varepsilon_j}$, so that considering $\widehat{I}_{\lambda,\theta}$ is even and continuous, we know that $\widehat{I}_{\lambda,\theta}^{\,-\varepsilon_j} \in\Sigma_j$. Finally, we get
$$
c_j= \inf_{A\in \Sigma_j} \sup_{u\in A} \widehat{I}_{\lambda,\theta}(u)\leq \sup_{u\in \widehat{I}_{\lambda,\theta}^{\,-\varepsilon_j}} \widehat{I}_{\lambda,\theta}(u)\leq -\varepsilon_j <0,
$$
concluding the proof.
\end{proof}

Clearly, once $j\in\mathbb N$ is fixed, the truncated functional $\widehat{I}_{\lambda,\theta}$ verifies Lemmas \ref{l4.1} and \ref{l4.2} if $\lambda\in(\lambda_j,\lambda^*)$, with $\lambda^*$ and $\lambda_j$ given in \eqref{tc3} and \eqref{s4.6}, respectively. In this direction, we observe that inequality $\lambda_j<\lambda^*$ can be rewritten as
$$
\delta_j^{\frac{q(p-k)}{p-q}+k}\cdot\frac{k(p-q)}{(p-k)^{\frac{p-k}{p-q}}(k-q)^{\frac{k-q}{p-q}}}\cdot q^{-\frac{p-k}{p-q}}\cdot p^{\frac{q-k}{p-q}}<
\dfrac{1}{\theta^\frac{p-k}{p_{s_{1}}^{*}-p}}\frac{(p_{s_{1}}^{*}-p)k}{(p_{s_{1}}^{*}-k)p}
	\left[\frac{(p-k)p_{s_{1}}^{*}}{(p_{s_{1}}^{*}-k)p}\right]^\frac{p-k}{p_{s_{1}}^{*}-p}
	\frac{S^{\frac{p_{s_{1}}^{*}-k}{p_{s_{1}}^{*}-p}}}{\|P\|_{r}},
$$
which holds true if $\theta<\theta_j$, with
\begin{equation}\label{qj}
\theta_j:=\delta_j^{-\frac{q(p_{s_{1}}^{*}-p)}{p-q}-\frac{k(p_{s_{1}}^{*}-p)}{p-k}}\cdot
\frac{(p-k)p_{s_{1}}^{*}}{(p_{s_{1}}^{*}-k)p}\cdot
\left(\frac{q}{p}\right)^{\frac{p_{s_{1}}^{*}-p}{p-q}}\cdot
\left[\frac{(p_{s_{1}}^{*}-p)(k-q)^{\frac{k-q}{p-q}}}{(p_{s_{1}}^{*}-k)(p-q)\|P\|_r}\right]^{\frac{p_{s_{1}}^{*}-p}{p-k}}\cdot S^{\frac{p_{s_{1}}^{*}-k}{p-k}}\cdot(p-k)^{\frac{p_{s_{1}}^{*}-p}{p-q}}.
\end{equation}
From the monotonicity of $\{\delta_j\}_j$, given in \eqref{s4.3}, we deduce that $\theta_j>\theta_{j+1}$ for any $j\in\mathbb N$. 

\begin{proof}[{\bf Proof of the Theorem \ref{t1}}]
Let $j\in\mathbb N$ be fixed and let us set $\lambda_j$ and $\theta_j$ as in \eqref{s4.6} and \eqref{qj}, respectively. Let $\theta\in(0,\theta_j)$, so that $\lambda_j<\lambda^*$, with $\lambda^*$ given in \eqref{tc3}. Then, let $\lambda\in(\lambda_j,\lambda^*)$ and let us consider the minimax sequence $\{c_j\}_j$ given in \eqref{cj}. By Lemma \ref{l4.2}, we know that
$$
-\infty<c_i<0,\quad\mbox{for\, any }\,i=1,\ldots,j,
$$
so that Lemma \ref{l4.1}$(ii)$ yields that $\widehat{I}_{\lambda,\theta}$ satisfies the $(PS)_{c_i}$ condition, for any $i=1,\ldots,j$.
Thus, $c_i$ are critical values for $\widehat{I}_{\lambda,\theta}$, see \cite{Rabinowitz}, and setting
$$
K_c:=\left\{u\in E\setminus\{0\}:\,\,\widehat{I}_{\lambda,\theta}(u)=c\,\mbox{ and }\,\widehat{I}\,'_{\lambda,\theta}(u)=0\right\},
$$
we infer that $K_{c_i}$ are compact, for any $i=1,\ldots,j$.

Now, we distinguish two situations. Either $\{c_i:\,\,i=1,\ldots,j\}$ are $j$ distinct critical values $\widehat{I}_{\lambda,\theta}$ or $c_n=c_{n+1}=\ldots=c_j=c$ for some $n\in\{1,\ldots,j-1\}$. In the second situation, since $K_c$ is compact, by a deformation lemma, Proposition \ref{s4p1} and arguing as in \cite[Lemma 3.6]{FFW}, we get
$$
\gamma(K_c)\geq j-n+1\geq2.
$$
Thus, $K_c$ has infinitely many points, see \cite[Remark 7.3]{Rabinowitz}, which are infinitely many critical values for $\widehat{I}_{\lambda,\theta}$ by Lemma \ref{l4.1}$(ii)$.

Consequently, $\widehat{I}_{\lambda,\theta}$ admits at least $j$ negative critical values, which represent at least $j$ negative critical values for $I_{\lambda,\theta}$ thanks to Lemma \ref{l4.1}$(i)$.
\end{proof}

\section{The existence of a nonnegative solution}\label{sec4}

We observe that since $P$ is a nonnegative nontrivial function satisfying $(P_{0})$, then  the set
$$\mathcal{A}_+:=\{\varphi\in E:\,\,\int_{\Omega}P(x)\varphi_+^kdx>0\}\neq\emptyset.$$
Indeed, using the hypothesis on $P$ and the Lebesgue differentiation theorem, there exists a point $x_0\in\Omega$ such that
$$\lim_{\varepsilon\to 0}\int_{B(x_0,\varepsilon)}P(x)dx=P(x_0)>0.$$ Therefore, there exists $\varepsilon>0$ such that $\int_{B(x_0,\varepsilon)}P(x)dx>0$.  Now choose $\varphi\in C^\infty_c(B(x_0,2\varepsilon))$ with $0\leq\varphi\leq 1$, $\varphi\equiv 1$ in $B(x_0,\varepsilon)$. Therefore, 
$$\int_{\Omega}P(x)\varphi^kdx=\int_{B(x_0,\varepsilon)}P(x)\varphi^kdx+\int_{B(x_0,2\varepsilon)\setminus B(x_0,\varepsilon)}P(x)\varphi^kdx\geq \int_{B(x_0,\varepsilon)}P(x)dx>0.$$
Hence, $\varphi\in \mathcal{A}_+$. Therefore, we can define 
\begin{equation}\label{alpha*}
\overline{\lambda}:=\inf_{\varphi\in\mathcal{A}_+}C_0(p,q,k)\left(\frac{[\varphi]^p_{s_1,p}}{p}\right)^\frac{k-q}{p-q}\left(\frac{[\varphi]^q_{s_2,q}}{q}\right)^\frac{p-k}{p-q}\frac{k}{\int_{\Omega}P(x)\varphi_+^kdx},
\end{equation}
where $C_0(p,q,k):=(\frac{p-q}{k-q})^\frac{k-q}{p-q}(\frac{p-q}{p-k})^\frac{p-k}{p-q}$.

Since we look for a nonnegative solution of \eqref{1.1}, we define the energy functional $J_{\lambda,\theta}:E\rightarrow \R$  such that
\begin{equation*}
J_{\lambda,\theta}(u)=\frac{1}{p}[u]_{s_{1},p}^{p}+\frac{1}{q}[u]_{s_{2},q}^{q}-\frac{\lambda}{k}\int_{\Omega}  P(x)(u_{+})^{k}dx-\frac{\theta}{p_{s_{1}}^{*}}\int_{\Omega}(u_{+})^{p_{s_{1}}^{*}} dx.
\end{equation*}
Then, dealing with $\overline{\lambda}$ given above, we study functional $J_{\lambda,\theta}$ on a suitable ball
$$
B_\varrho(0)=\left\{u\in E:\,\,\|u\|_E\leq \varrho\right\},
$$
proving the following result.

\begin{lemma}\label{negativo}
Let $\lambda>\overline{\lambda}$, with $\overline{\lambda}$ as defined in \eqref{alpha*}. Then, there exist $\varrho>0$, $\alpha>0$ and $\widehat{\theta}>0$ such that
$$\inf_{u\in B_\varrho(0)} J_{\lambda,\theta}(u)<0<\alpha\leq\inf_{u\in \partial B_\varrho(0)} J_{\lambda,\theta}(u),$$
whenever $\theta\in\left(0,\widehat{\theta}\right)$.
\end{lemma}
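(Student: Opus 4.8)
The plan is to establish the two inequalities separately. For the lower bound on $\partial B_\varrho(0)$, I would start from the estimate already derived for $I_{\lambda,\theta}$ in Section \ref{sec3}: since $0\le u_+\le|u|$ pointwise, we have $J_{\lambda,\theta}(u)\ge I_{\lambda,\theta}(u)$ (with $P$ nonnegative), and hence, using $(P_0)$, \eqref{a2} and Young's inequality applied to split the norm $\|u\|_E=[u]_{s_1,p}+[u]_{s_2,q}$, one bounds $J_{\lambda,\theta}(u)$ below by a function of $[u]_{s_1,p}$ and $[u]_{s_2,q}$ of the form $t^p/p+\tau^q/q-\lambda C_1(t+\tau)^k-\theta C_2(t+\tau)^{p_{s_1}^*}$. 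The delicate point is that $q<k<p<p_{s_1}^*$, so near the origin the leading-order behaviour is governed by the $q$-term, which is good. More carefully, on the sphere $\|u\|_E=\varrho$ with $\varrho$ small, the term $\theta C_2\varrho^{p_{s_1}^*}$ is negligible if $\theta$ is small, and one must show $t^p/p+\tau^q/q-\lambda C_1\varrho^k>\alpha>0$ whenever $t+\tau=\varrho$. Here I would split into the case $\tau\ge\varrho/2$ (so the $q$-term dominates and gives $\tau^q/q\ge(\varrho/2)^q/q\gg\lambda C_1\varrho^k$ for $\varrho$ small, since $q<k$) and the case $t\ge\varrho/2$ (so the $p$-term is of order $\varrho^p$, which is still $\gg\varrho^k$? no — $p>k$, so this is small; instead note that then $\tau=\varrho-t$ could be small, so one actually needs the $q$-term here too from $t$: use $t^p/p\ge$ ... ). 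The cleanest route is: for fixed small $\varrho$, $\min_{t+\tau=\varrho}(t^p/p+\tau^q/q)=:m(\varrho)$; since $q<p$, $m(\varrho)\sim c\varrho^q$ as $\varrho\to0$ (the minimiser puts most mass in $t$, i.e. $\tau\sim\varrho$), so $m(\varrho)-\lambda C_1\varrho^k\ge \tfrac12 m(\varrho)>0$ for $\varrho$ small because $k>q$. Then choose $\widehat\theta$ so that $\theta C_2\varrho^{p_{s_1}^*}<\tfrac14 m(\varrho)$ and set $\alpha=\tfrac14 m(\varrho)$.

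For the negativity of the infimum over $B_\varrho(0)$, I would use the hypothesis $\lambda>\overline\lambda$ and the variational characterisation \eqref{alpha*}. By definition of $\overline\lambda$ as an infimum, there exists $\varphi\in\mathcal A_+$ with
$$
\lambda>C_0(p,q,k)\left(\frac{[\varphi]^p_{s_1,p}}{p}\right)^{\frac{k-q}{p-q}}\left(\frac{[\varphi]^q_{s_2,q}}{q}\right)^{\frac{p-k}{p-q}}\frac{k}{\int_\Omega P(x)\varphi_+^k\,dx}.
$$
Then I would test $J_{\lambda,\theta}$ along the ray $t\varphi$ with $t>0$ small and drop the nonpositive critical term, writing $J_{\lambda,\theta}(t\varphi)\le \frac{t^p}{p}[\varphi]^p_{s_1,p}+\frac{t^q}{q}[\varphi]^q_{s_2,q}-\frac{\lambda t^k}{k}\int_\Omega P(x)\varphi_+^k\,dx$. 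The function $t\mapsto A t^p+Bt^q-Dt^k$ with $q<k<p$ and $A,B,D>0$ is negative for $t$ in a nonempty interval precisely when its minimum is negative; minimising the two-term "cost" $At^p+Bt^q$ against $Dt^k$ and using the weighted AM–GM inequality $At^p+Bt^q\ge C_0(p,q,k)(A/\,\cdot\,)^{\cdots}\ldots$ — more precisely, optimising over the split of the exponent — one sees that $\min_t(At^p+Bt^q-Dt^k)<0$ iff $D>C_0(p,q,k)A^{\frac{k-q}{p-q}}B^{\frac{p-k}{p-q}}$, which is exactly the condition guaranteed by $\lambda>\overline\lambda$ and the choice of $\varphi$ (with $A=[\varphi]^p_{s_1,p}/p$, $B=[\varphi]^q_{s_2,q}/q$, $D=\lambda\int_\Omega P\varphi_+^k/k$). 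Hence there is $t_0>0$ with $t_0\varphi\in B_\varrho(0)$ (shrinking $\varrho$ if necessary, or rescaling $\varphi$) and $J_{\lambda,\theta}(t_0\varphi)<0$, giving $\inf_{B_\varrho(0)}J_{\lambda,\theta}<0$.

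Finally I would reconcile the two parts: the negativity argument fixes a particular $\varrho_1>0$ (any $\varrho\ge t_0\|\varphi\|_E$ works, and shrinking $t_0$ we may take $\varrho_1$ as small as we like), while the positivity-on-the-sphere argument requires $\varrho$ small and then $\theta$ small depending on that $\varrho$. So I would first run the negativity argument to see it works for every sufficiently small $\varrho$, then fix $\varrho=\varrho_1$ small enough that the sphere estimate $m(\varrho_1)-\lambda C_1\varrho_1^k\ge\tfrac12 m(\varrho_1)$ holds, then choose $\widehat\theta=\widehat\theta(\varrho_1,\lambda)>0$ so that $\theta C_2\varrho_1^{p_{s_1}^*}<\tfrac14 m(\varrho_1)$ for all $\theta\in(0,\widehat\theta)$, and set $\alpha=\tfrac14 m(\varrho_1)$. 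The main obstacle I anticipate is the bookkeeping in the sphere estimate: because the norm on $E$ is the sum $[u]_{s_1,p}+[u]_{s_2,q}$ of two quantities with different homogeneities, one cannot simply reduce to a single power of $\varrho$, and one must carefully track which component carries the mass; the key structural fact making it work is $q<k$, so that the $L^q$-type subcritical term is dominated near the origin by the $q$-homogeneous part of the energy — this is precisely the role of the nontrivial intersection space $E$ emphasised in the introduction.
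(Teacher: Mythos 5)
Your argument for $\inf_{B_\varrho(0)}J_{\lambda,\theta}<0$ matches the paper exactly: test along the ray $t\varphi$, drop the nonpositive critical term, and observe that $\min_t\bigl(At^p+Bt^q-Dt^k\bigr)<0$ precisely when $D>C_0(p,q,k)A^{\frac{k-q}{p-q}}B^{\frac{p-k}{p-q}}$, which is the content of $\lambda>\overline\lambda$; this is fine.

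The sphere estimate is where there is a genuine gap, and the gap is exactly at the point you flagged and then tried to talk your way around. Your plan is to take $\varrho$ small and argue $m(\varrho):=\min_{t+\tau=\varrho}\bigl(\tfrac{t^p}{p}+\tfrac{\tau^q}{q}\bigr)\sim c\varrho^q$. That asymptotic is wrong. For small $\varrho$ the minimiser puts the mass in the $t$-slot, not the $\tau$-slot: the stationarity condition $t^{p-1}=(\varrho-t)^{q-1}$ with $p>q$ forces $\varrho-t\sim\varrho^{(p-1)/(q-1)}\ll\varrho$, hence $t\approx\varrho$ and $m(\varrho)\sim\varrho^p/p$, not $\varrho^q/q$. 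Since $p>k$, we then have $m(\varrho)\ll\lambda C_1\varrho^k$ as $\varrho\to0$, so $m(\varrho)-\lambda C_1\varrho^k$ is \emph{negative} for small $\varrho$ and your lower bound collapses. (Concretely: $u$ with $[u]_{s_1,p}\approx\varrho$, $[u]_{s_2,q}\approx0$ witnesses the failure.) The subsequent reconciliation step, ``shrinking $t_0$ we may take $\varrho_1$ as small as we like,'' is also false: along the ray $J_{\lambda,\theta}(t\varphi)=t^q f_\lambda(t)-O(t^{p^*_{s_1}})$ with $f_\lambda(0)=[\varphi]^q_{s_2,q}/q>0$, so $J_{\lambda,\theta}(t\varphi)>0$ for all sufficiently small $t>0$; the negative values occur only once $t$ is \emph{away} from $0$, so the point $t_0\varphi$ with negative energy has a fixed positive $E$-norm and cannot be pushed toward the origin by shrinking $t_0$ (rescaling $\varphi$ does not help either, since the condition in \eqref{alpha*} is scale-invariant and $t^*\|\varphi\|_E$ is unchanged under $\varphi\mapsto c\varphi$).

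The paper resolves this by taking $\varrho$ \emph{large} (the explicit choice in \eqref{erre} forces $\varrho\geq 1+t^*\|\varphi\|_E+2\overline C_0\lambda^{1/(p-k)}$ and a second $\lambda$-dependent lower bound), and then running a dichotomy on $\partial B_\varrho(0)$ according to whether $[u]_{s_1,p}\leq\overline C_0\lambda^{1/(p-k)}$ or not. In the first case $[u]_{s_1,p}\leq\varrho/2$, hence $[u]_{s_2,q}\geq\varrho/2$, and the $q$-term $\tfrac1q[u]^q_{s_2,q}$ dominates the bounded $\lambda$-term; in the second case $[u]_{s_1,p}$ is above the threshold where $\tfrac1p t^p>\tfrac{2\lambda}{k}S^{-k/p}\|P\|_r t^k$, so half of the $p$-term absorbs the $\lambda$-term and one lands again on a $\|u\|_E^q$-type lower bound. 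Both cases then require $\theta$ small to beat the critical term of size $\theta S^{-p^*_{s_1}/p}\varrho^{p^*_{s_1}}/p^*_{s_1}$, giving $\widehat\theta$. The structural point your proof misses is that in the sandwich regime $q<k<p$ the $p$-term can only dominate the $k$-term at \emph{large} arguments, so the sphere must sit far from the origin, and the $q$-term is used only on the part of the sphere where $[u]_{s_1,p}$ is forced to be small relative to $\varrho$.
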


\begin{proof}
Fix $\lambda>\overline{\lambda}$. Thus, there exists $\varphi\in E$ such that 
\begin{equation}\label{7-5-24-1}
C_0(p,q,k)\left(\frac{[\varphi]^p_{s_1,p}}{p}\right)^\frac{k-q}{p-q}\left(\frac{[\varphi]^q_{s_2,q}}{q}\right)^\frac{p-k}{p-q}\frac{k}{\int_{\Omega}P(x)\varphi_+^kdx}<\lambda.
\end{equation}
Now, for any $t>0$, we have
$$J_{\lambda,\theta}(t\varphi)=t^qf_\lambda(t)-\frac{\theta t^{p_{s_{1}}^{*}}}{p_{s_{1}}^{*}}\int_{\Omega}\varphi_+^{p_{s_{1}}^{*}} dx,$$
where 
\begin{align*}
\begin{split}
&f_\lambda(t):=d_1+d_2t^{p-q}-\lambda d_3t^{k-q},\quad\mbox{with}\\
&d_{1}:=\frac{[\varphi]_{s_{2},q}^{q}}{q},\quad d_{2}:=\frac{[\varphi]_{s_{1},p}^{p}}{p}\quad\mbox{and}\quad
d_{3}:=\frac{1}{k}\int_{\Omega}  P(x)\varphi_+^{k}dx.
\end{split}
\end{align*}
We observe that there exists $t^*=t^*(\lambda)$ as
$$
t^*:=\left[\lambda\cdot\frac{(k-q)\,d_3}{(p-q)\,d_2}\right]^\frac{1}{p-k},
$$
such that
$$\min_{t\geq0}f_\lambda(t)=f_\lambda\left(t^*\right)=
d_1-\lambda^{\frac{p-q}{p-k}}\cdot\frac{p-k}{p-q}\cdot
\left(\frac{k-q}{p-q}\right)^\frac{k-q}{p-k}\cdot(d_2)^\frac{q-k}{p-k}\cdot(d_3)^\frac{p-q}{p-k}.$$
Thus, by \eqref{7-5-24-1} we get
$$\min_{t\geq0}f_\lambda(t)=f_\lambda\left(t^*\right)<0,$$
which yields
$$J_{\lambda,\theta}(t^*\varphi)=\left(t^*\right)^qf_\lambda(t^*)-\frac{\theta\left(t^*\right)^{p^*_{s_1}}}{p^*_{s_1}}\int_{\Omega}\varphi_+^{p^*_{s_1}}dx <0.$$
Let $\overline{C}_0:=\left[\displaystyle\frac{2p}{k}S^{-\frac{k}{p}}\|P\|_r\right]^{\frac{1}{p-k}}$, if we fix $\varrho=\varrho(\lambda)$ as
\begin{equation}\label{erre}
\varrho:=\max\left\{1+t^*\|\varphi\|_E+2\overline{C}_0\lambda^{\frac{1}{p-k}},
\left[\frac{\lambda q2^{q+1}}{k}S^{-\frac{k}{p}}\|P\|_r\left(\overline{C}_0\lambda^{\frac{1}{p-k}}\right)^k\right]^\frac{1}{q}\right\},
\end{equation}
clearly $\|t^*\varphi\|_E\leq \varrho$ implying $t^*\varphi\in B_\varrho(0)$.
Therefore, we get
$$\inf_{u\in B_\varrho(0)} J_{\lambda,\theta}(u)\leq J_{\lambda,\theta}\left(t^*\varphi\right)<0.$$

On the other hand, we claim that there exists $d>0$ such that
\begin{equation}\label{affermazione2}
\inf_{u\in \partial B_\varrho(0)} J_{\lambda,\theta}(u)\geq d,
\end{equation}
for $\theta$ sufficiently small.
By $(P_0)$ and \eqref{a2}, for any $u\in E$ we have
\begin{equation}\label{boh}
J_{\lambda,\theta}(u)\geq\frac{1}{p}[u]_{s_1,p}^p+\frac{1}{q}[u]_{s_2,q}^q-\lambda\frac{S^{-\frac{k}{p}}\|P\|_r}{k}[u]_{s_1,p}^k-
\frac{\theta S^{-\frac{p_{s_1}^*}{p}}}{p_{s_1}^*}[u]_{s_1,p}^{p_{s_1}^*}.
\end{equation}
Let $u\in E$ with $\|u\|_E=\varrho$. In order to prove \eqref{affermazione2}, we distinguish the following two cases.
\\

	\begin{itemize}
		\item \textbf{Case 1.} {\em Let $\displaystyle\frac{1}{p}[u]_{s_1,p}^p\leq \displaystyle\frac{2\lambda}{k}S^{-\frac{k}{p}}\|P\|_r[u]^k_{s_1,p}$, that is $[u]_{s_1,p}\leq \overline{C}_0\lambda^{\frac{1}{p-k}}$.}
		\\
		
		\noindent By \eqref{erre}, we have 
		$$[u]_{s_1,p}\leq \frac{1}{2}\varrho=\frac{1}{2}\|u\|_E=\frac{1}{2}\left([u]_{s_1,p}+[u]_{s_2,q}\right),$$
		from which $[u]_{s_1,p}\leq[u]_{s_2,q}$ and so
		$$
		2[u]_{s_2,q}\geq\|u\|_E.
		$$
		Furthermore, we deduce
		\begin{align*}
			\frac{\lambda}{k}S^{-\frac{k}{p}}\|P\|_r[u]^k_{s_1,p}
			\leq \frac{\lambda}{k}S^{-\frac{k}{p}}\|P\|_r\left(\overline{C}_0\lambda^{\frac{1}{p-k}}\right)^k
			\leq \frac{1}{q2^{q+1}}\varrho^q=\frac{1}{q2^{q+1}}\|u\|_E^q.
		\end{align*}
		Using the estimates above, we obtain from \eqref{boh} that
		\begin{align*}
			\notag	J_{\lambda,\theta}(u)\geq \frac{1}{q2^q}\|u\|_E^q-\frac{1}{q2^{q+1}}\|u\|_E^q-
\frac{\theta S^{-\frac{p_{s_1}^*}{p}}}{p_{s_1}^*}\|u\|_E^{p_{s_1}^*}
=\frac{S^{-\frac{p_{s_1}^*}{p}}\varrho^{p_{s_1}^*}}{p_{s_1}^*}\left(\frac{p_{s_1}^*}{q2^{q+1}S^{-\frac{p_{s_1}^*}{p}}}\varrho^{q-p_{s_1}^*}-\theta\right).
		\end{align*}
		Thus, setting
		$$
		\theta_a:=\frac{p_{s_1}^*}{q2^{q+1}S^{-\frac{p_{s_1}^*}{p}}}\varrho^{q-p_{s_1}^*}\quad\mbox{and}\quad
		\alpha_a:=\frac{S^{-\frac{p_{s_1}^*}{p}}\varrho^{p_{s_1}^*}}{p_{s_1}^*}\left(\frac{p_{s_1}^*}{q2^{q+1}S^{-\frac{p_{s_1}^*}{p}}}\varrho^{q-p_{s_1}^*}-\theta\right),
		$$
		we get \eqref{affermazione2} with $d=\alpha_a$, whenever $\theta\in(0,\theta_a)$.
\\
		
\item \textbf{Case 2.} {\em Let $\displaystyle\frac{1}{p}[u]_{s_1,p}^p> \displaystyle\frac{2\lambda}{k}S^{-\frac{k}{p}}\|P\|_r[u]^k_{s_1,p}$, that is $[u]_{s_1,p}> \overline{C}_0\lambda^{\frac{1}{p-k}}$.}
		\\
		
		\noindent In this case, we derive from \eqref{boh} that\begin{align*}
			J_{\lambda,\theta}(u)&\geq \frac{1}{2p}[u]_{s_1,p}^p+\frac{1}{q}[u]_{s_2,q}^q-
\frac{\theta S^{-\frac{p_{s_1}^*}{p}}}{p_{s_1}^*}\|u\|_E^{p_{s_1}^*}\\
&\geq\frac{1}{2p}\left(\overline{C}_0^{p-q}\lambda^{\frac{p-q}{p-k}}[u]_{s_1,p}^q+[u]_{s_2,q}^q\right)-
\frac{\theta S^{-\frac{p_{s_1}^*}{p}}}{p_{s_1}^*}\|u\|_E^{p_{s_1}^*}\\
&\geq\frac{\overline{C}_1}{2^qp}\|u\|_E^q-\frac{\theta S^{-\frac{p_{s_1}^*}{p}}}{p_{s_1}^*}\|u\|_E^{p_{s_1}^*}\\
&=\frac{S^{-\frac{p_{s_1}^*}{p}}\varrho^{p_{s_1}^*}}{p_{s_1}^*}
\left(\frac{\overline{C}_1p_{s_1}^*}{2^qpS^{-\frac{p_{s_1}^*}{p}}}\varrho^{q-p_{s_1}^*}-\theta\right),
		\end{align*}
with $\overline{C}_1:=\overline{C}_1(\lambda)=\min\left\{\overline{C}_0^{\,p-q}\lambda^{\frac{p-q}{p-k}},1\right\}$.
Hence, setting
		$$
		\theta_b:=\frac{\overline{C}_1p_{s_1}^*}{2^qpS^{-\frac{p_{s_1}^*}{p}}}\varrho^{q-p_{s_1}^*}\quad\mbox{and}\quad
		\alpha_b:=\frac{S^{-\frac{p_{s_1}^*}{p}}\varrho^{p_{s_1}^*}}{p_{s_1}^*}
\left(\frac{\overline{C}_1p_{s_1}^*}{2^qpS^{-\frac{p_{s_1}^*}{p}}}\varrho^{q-p_{s_1}^*}-\theta\right),
		$$
		we get \eqref{affermazione2} with $d=\alpha_b$, whenever $\theta\in(0,\theta_b)$.
	\end{itemize}

\vspace{0.2cm}
\noindent
It is enough to set $\widehat{\theta}:=\min\{\theta_a,\theta_b\}$ and $\alpha:=\min\{\alpha_a,\alpha_b\}$ in order to complete the proof.
\end{proof}

\begin{proof}[{\bf Proof of the Theorem \ref{t3}}]

We first observe that $\overline{c}$ given in \eqref{cappello} is positive whenever $\theta<\widetilde{\theta}$, with $\widetilde{\theta}=\widetilde{\theta}(\lambda)$ set as
\begin{equation}\label{qtilde}
\widetilde{\theta}:=\left[
\frac{S^{\frac{p_{s_1}^*}{p_{s_1}^*-p}}}{\left(\lambda\|P\|_r\right)^{\frac{p_{s_1}^*}{p_{s_1}^*-k}}}
\cdot\frac{k}{p_{s_1}^*-k}
\cdot\left(\frac{p_{s_1}^*-p}{p-k}\right)^{\frac{p_{s_1}^*}{p_{s_1}^*-k}}
\right]^\frac{(p_{s_1}^*-p)(p_{s_1}^*-k)}{p_{s_1}^*(p-k)}.
\end{equation}
Thus, let us fix $\lambda>\overline{\lambda}$, with $\overline{\lambda}$ given in \eqref{alpha*}. Let us fix $\theta\in\left(0,\theta^*\right)$ with $\theta^*:=\min\left\{\widehat{\theta},\widetilde{\theta}\right\}$, where $\widehat{\theta}$ and $\widetilde{\theta}$ are as in Lemma~\ref{negativo} and \eqref{qtilde} respectively.

Now, let us set the minimum $$c_\varrho:=\inf_{u\in B_\varrho(0)} J_{\lambda,\theta}(u).$$
Thanks to Lemma \ref{negativo}, we can apply the Ekeland variational principle to $J_{\lambda,\theta}$ which provides a minimizing sequence $\{u_n\}_n\subset B_\varrho(0)$, such that  $J_{\lambda,\theta}(u_{n})\rightarrow c_\varrho$ and $J'_{\lambda,\theta}(u_{n})\rightarrow 0$.
However, since $c_\varrho<0<\overline{c}$ by Lemma \ref{negativo}, we can apply Lemma \ref{l5} for the sequence $\{u_n\}_n$, so that there exists $u\in E$ such that $u_n\to u$. Hence $J'_{\lambda,\theta}(u)=0$ and $J_{\lambda,\theta}(u)=c_\varrho<0$. Thus, $u$ is a nontrivial weak solution of the problem \eqref{1.1} with negative energy.  Next, we prove that $u$ is nonnegative. Indeed, we have
\begin{equation*}
	\begin{split}
		0=\langle J'_{\lambda,\theta}(u),u_{-}\rangle=
		&\iint_{\R^{2N}} \dfrac{|u(x)-u(y)|^{p-2}(u(x)-u(y))(u_{-}(x)-u_{-}(y))}{|x-y|^{N+ps_1}}dx dy\\
		&\quad+\iint_{\R^{2N}} \dfrac{|u(x)-u(y)|^{q-2}(u(x)-u(y))(u_{-}(x)-u_{-}(y))}{|x-y|^{N+qs_2}}dx dy\\
		& \geq\iint_{\R^{2N}} \dfrac{|u_{-}(x)-u_{-}(y)|^{p}}{|x-y|^{N+ps_1}}dx dy+\iint_{\R^{2N}} \dfrac{|u_{-}(x)-u_{-}(y)|^{q}}{|x-y|^{N+qs_2}}dx dy\\
		&= [u_{-}]_{s_{1},p}^{p}+[u_{-}]_{s_{2},q}^{q},
	\end{split}
\end{equation*}
which implies that $u_{-}=0$. Hence, $u\geq 0$. This completes the proof.
\end{proof}

\vspace{5mm}

\section*{Declaration}

{\bf Funding.} M.~Bhakta and S.~Gupta are partially supported by the {\em  DST Swarnajaynti fellowship}  (SB/SJF/2021-22/09).
A. Fiscella is member of {\em Gruppo Nazionale per l'Analisi
Ma\-te\-ma\-ti\-ca, la Probabilit\`a e le loro Applicazioni}
(GNAMPA) of the {\em Istituto Nazionale di Alta Matematica} (INdAM).
A. Fiscella realized the manuscript within the auspices of the FAPESP project titled
\textit{Non-uniformly elliptic problems} (2024/04156-0).

\vspace{5mm}

{\bf Competing interests.} The authors have no competing interests to declare that are relevant to the content of this article.

\vspace{5mm}

{\bf Data availability statement.} Data sharing not applicable to this article as no data sets were generated or analysed during the current study.

\end{document}